\DeclareMathOperator{\Aut}{Aut}
\DeclareMathOperator{\id}{id}
\DeclareMathOperator{\Coker}{Coker}
\DeclareMathOperator{\Hom}{Hom}
\DeclareMathOperator{\Mor}{Mor}
\DeclareMathOperator{\Map}{Map}
\DeclareMathOperator{\Ker}{Ker}
\DeclareMathOperator{\Ob}{Ob}
\DeclareMathOperator{\Vect}{\mathbf{Vect}}
\DeclareMathOperator{\ev}{ev}
\DeclareMathOperator{\2ch}{Kom(\Pic)}
\newcommand{\Ch}{{\v C}ech}
\DeclareMathOperator{\cu}{C(X;\mathfrak{U}_I)}
\newcommand{\ChZG}{hermitian line $0$-gerbe}
\newcommand{\ChG}{flat hermitian line $1$-gerbe}
\newcommand{\ChC}{hermitian line $1$-cocycle}
\newcommand{\ChGG}{flat hermitian line $2$-gerbe}
\DeclareMathOperator{\Sing}{Sing}
\newtheorem{thm}{Theorem}[section]
\newtheorem{prop}[thm]{Proposition}
\theoremstyle{definition}
\newtheorem{df}[thm]{Definition}
\newtheorem{ex}[thm]{Example}
\theoremstyle{remark}
\newtheorem*{rem}{Remark}
\newtheorem{ack}{Acknowledgments}
\def\Z{\mathbb Z}
\def\Pic{\mathcal P\textit{ic}}
\def\A{\mathcal A}
\def\C{\mathcal C}
\def\D{\mathcal D}
\def\G{\mathcal G}
\def\B{\mathcal B}
\def\L{\mathcal L}
\def\O{\mathcal O}
\newcommand{\del}{\partial}
\begin{document}

\title[Categorification of Dijkgraaf-Witten Theory]{Categorification
  of Dijkgraaf-Witten Theory}
\author[A. Sharma]{Amit Sharma}
\email{sharm121@umn.edu}
\address {School of Mathematics\\University of Minnesota\\
  Minneapolis, MN 55455, USA}
\author[A. A. Voronov]{Alexander A. Voronov}
\email{voronov@umn.edu}
\address {School of Mathematics\\University of Minnesota\\
  Minneapolis, MN 55455, USA, and Kavli IPMU (WPI), UTIAS, The
  University of Tokyo, Kashiwa, Chiba 277-8583, Japan}

\thanks{This work was supported by the World Premier International
  Research Center Initiative (WPI), MEXT, Japan, and a
  grant from the Simons Foundation (\#282349 to A.~V.).}

\date{February 8, 2016}

\begin{abstract}
  The goal of the paper is to categorify Dijkgraaf-Witten (DW) theory,
  aiming at providing foundation for a direct construction of DW
  theory as an Extended Topological Quantum Field Theory. The main
  tool is cohomology with coefficients in a Picard groupoid, namely
  the Picard groupoid of hermitian lines.
\end{abstract}

\maketitle

\tableofcontents

\section{Introduction}

R.~Dijkgraaf and E.~Witten in \cite{dw} constructed a gauge theory
with a finite gauge group $G$ as a ``toy model,'' a tool for studying
more general gauge theories with compact gauge groups. Their goal was
to describe this theory, known as \emph{DW theory}, as a Topological
Quantum Field Theory (TQFT), \emph{i.e}., a functor on the category of
3-dimensional (3d) cobordisms to that of vector spaces, starting with
an action given by a cocycle $\alpha \in Z^3(G; U(1))$. Dijkgraaf and
Witten indicated that the vector space $\Phi(Y)$ corresponding to a
closed oriented 2d manifold $Y$ was closely related to the set
$\Hom(\pi_1 (Y), G)/G$ of equivalence classes of principal $G$-bundles
over $Y$ and that it could be constructed by cutting the surface $Y$
into pairs of pants, as $\Phi$ was expected to be a functor.  The
linear map $\Phi(X): \del_- X \to \del_+ X$ corresponding to a 3d
oriented cobordism $X$ between closed manifolds $\del_- X$ and $\del_+
X$ depended on such choices as the choice of a map $\Hom (\pi_1 (X,
x_0), G) \to \Map(X, BG)$, the choice of a basepoint $x_0$, the choice
of a chain, via triangulation, representing the relative fundamental
cycle $[X] \in H_3(X, \del X; \Z)$, which was interpreted as ``lattice
gauge theory.'' One can say that, from the categorical point of view,
Dijkgraaf and Witten constructed a TQFT functor on a certain
subcategory of cobordisms decorated with appropriate extra structure
utilized in their constructions. They used an orbifold approach to
taking the homotopy quotient by $G$, that is to say, worked with the
$G$-set $\Hom(\pi_1(Y), G)$.

D.~Freed and F.~Quinn in \cite{fq, freed} streamlined the construction
of the TQFT functor $\Phi$, so that $\Phi(X)$ would no longer depend
on the choice of a representative of the fundamental cycle $[X]$ and
thereby would produce a TQFT functor on the category of
cobordisms. They also generalized the construction to $n$-dimensional
cobordisms. Their main tool was to define pairings between cocycles in
$Z^{n+1} (Y, U(1))$ and cycles $Z_n (Y, \Z)$ and between $Z^{n+1} (X,
U(1))$ and cycles $Z_{n+1} (X, \del X; \Z)$, resembling but certainly
different from cap product, which would not even be defined because of
dimension considerations. Freed and Quinn introduced the idea of an
\emph{invariant section} of a flat hermitian line bundle over a
groupoid. This is a particular case of the idea of the limit of a
functor, and in this context, is akin to taking a global section.

J.~Lurie in \cite{heuts-lurie} sketched a different construction of
Dijkgraaf-Witten theory. Rather than using the orbifold $\Hom
(\pi_1(Y), G)/G$, he modeled the set of equivalence classes of
principal $G$-bundles on the mapping space $\Map(Y,BG)$. Given a
cohomology class $\alpha \in H^{n+1}(BG; U(1))$ and a closed oriented
$n$-manifold $Y$, he used a ``push-pull'' construction $\pi_* \ev^*
\alpha \in H^1 (\Map (Y, BG); U(1))$ for the diagram
\[
\begin{CD}
Y \times \Map (Y,BG) @>\ev>> BG\\
@V{\pi}VV\\
\Map (Y, BG)
\end{CD}
\]
to obtain a hermitian line bundle $\L_Y$ over $Y$. Then he defined the
TQFT functor $\Phi$ on objects by taking the space
\[
\Phi(Y) := H^0(\Map (Y, BG), \L_Y)
\]
of global sections. He used \emph{ambidexterity}, a natural
isomorphism
\[
H^0 (\Map (Y, BG), \L_Y) \xrightarrow{\sim} H_0 (\Map (Y, BG), \L_Y),
\]
to produce a linear map
\[
\Phi(X): \Phi (\del_- X) \to \Phi (\del_+ X),
\]
using push-pull again, now along the diagram
\[
\Map (\del_- X, BG) \xleftarrow{p_-} \Map (X, BG) \xrightarrow{p_+}
\Map (\del_+ X, BG).
\]
Lurie's construction deliberately avoided the following subtlety. The
hermitian line bundle $\L_Y$ is determined by the cohomology class
$\alpha$ only up to isomorphism. Starting with a cocycle $\alpha \in
Z^{n+1}(BG; U(1))$ would partially fix the problem, because the
resulting cocycle $\pi_* \ev^* \alpha \in Z^1(\Map (Y, BG); U(1))$ is
not quite the same as a hermitian line bundle: isomorphic, but
different hermitian line bundles may correspond to the same cocycle,
whereas the cocycle is determined by a hermitian line bundle only up
to condoundary. Moreover, the push-pull cocycle $\pi_* \ev^* \alpha$
will depend on the choice of a cycle representing the fundamental
class $[Y] \in H_n (Y; \Z)$.

In the current paper, we replace the coefficient group $U(1)$ with an
equivalent Picard groupoid, namely the Picard groupoid $\L$ of
hermitian lines, and notice that an object of $H^0(M, \L)$ is exactly
a flat hermitian line bundle over $M$, see Section~\ref{gerbes}.

The paper \cite{fhlt} attempted the construction of an Extended
Topological Quantum Field Theory (ETQFT), which is defined on
cobordisms with corners, rather than boundary, and a generalization of
the DW theory to the case of a compact group $G$. The construction
utilizes the Cobordism Hypothesis, which asserts that an ETQFT is
determined by its value on zero-dimensional manifolds. The
two-dimensional case of the cobordism hypothesis was proved by
C.~J. Schommer-Pries in \cite{SP}, and the full version was proven by
Lurie in \cite{JL1}. However, Freed, Hopkins, Lurie, and Teleman
emphasize the importance of a direct construction, which has not been
been done yet.

This paper arose from the authors' trying to find an approach to this
hypothetical direct construction of an ETQFT. In the process we have
realized that Freed and Quinn's pairing makes sense as a cohomological
operation, cap product, if the group $H^{n+1} (Y; U(1))$ is replaced
with cohomology $H^n (Y; \L)$ with coefficients in the Picard groupoid
$\L$ of hermitian line bundles. Categorifying the coefficients goes
along with lowering the cohomological degree, thus opening a way to
defining cap products as well as extending the TQFT to an ETQFT by
further categorification to higher Picard groupoids and higher gerbes.

Another novel feature of our approach is that we do not use
ambidexterity, but rather a transfer map in the context of cohomology
with coefficients in Picard groupoids. In principle, one can view the
transfer map as an avatar of ambidexterity, but it might be argued
that using an avatar is less demanding than engaging the full power of a
deity.

\begin{ack}
  We thank Jim Stasheff for valuable comments on the first version of
  the manuscript.  A.~V. gratefully acknowledges support from the
  Simons Center for Geometry and Physics, Stony Brook University, and
  the Graduate School of Mathematical Sciences, The University of
  Tokyo, at which some of the research for this paper was performed.
\end{ack}

\section{Setup}

We will consider (flat) hermitian line gerbes over simplicial sets. To
deal with gerbes over manifolds and topological spaces, we will
associate simplicial sets to them in a standard way: by taking
singular simplices or the nerve of an open cover. Flat hermitian line
gerbes are analogous to more traditional gerbes over topological
spaces with the constant sheaf $U(1)$ as the band, whether given as
stacks of groupoids, via gluing (descent) data, or as higher bundles,
\cite{breen-messing,brylinski,Moerdijk,murray}. We will take the
liberty of omitting the adjective ``flat'' when referring to flat
hermitian line bundles and gerbes.

We will describe cohomology with coefficients in Picard groupoids over
simplicial sets and later apply this construction to cobordisms, which
are manifolds, rather than simplicial sets. This may be done by
working with the simplicial set of singular chains associated to the
cobordism or by using the nerve of a sufficiently fine open covering,
see examples in Section~\ref{gerbes}.

\subsection{Cohomology with coefficients in Picard groupoids}
\label{coh}

A \emph{Picard groupoid} is a symmetric monoidal groupoid in which
every object is invertible, up to isomorphism, with respect to the
tensor product, which, by a slight abuse of notation, we denote
$+$. More precisely, for each object $s$ of a Picard groupoid $\A$,
the functors $t \mapsto s + t$, and $t \mapsto t + s$ define
autoequivalences of $\A$ as a category. In this case, one can define a
functor $\A \to \A$, $s \mapsto -s$, and natural isomorphisms
\[
m= m_s: s + (-s) \to 0, \qquad n=n_s: (-s) + s \to 0
\]
such that $l_s (m_s + \id_s) = r_s (\id_s + n_s) \alpha_{s,-s,s}$ for
all objects $s$ of $\A$, where $0$ is the zero (also known as unit)
object of $\A$ and
\begin{gather}
\label{structure1}
  \alpha_{s,t,u}: (s+t)+u \to s + (t+u) \qquad \text{and}\\
\label{structure2}
  l_s: 0+s \to s, \quad r_s: s + 0 \to s
\end{gather}
are the natural transformations of the monoidal structure on $\A$. We
will assume that $-0 = 0$, $m_0 = r_0$, and $n_0 = l_0$. Another
structure natural transformation is a symmetry:
\[
\beta_{s,t}: s+t \to  t+s,
\]
making $\A$ to be a symmetric monoidal category.  Given a Picard
groupoid $\A$, let $\pi_0 (\A)$ denote the abelian group of its
connected components and $\pi_1 (\A)$ denote the abelian group of
automorphisms of the zero object.

A \emph{homomorphism} between two Picard groupoids $\A$ and $\B$ is a
functor $F: \A \to \B$ and an assignment of a \emph{coherence
  morphism} which is an arrow of $\B$, $\phi^F_{s,t}:F(s) + F(t) \to
F(s+t)$, to every pair of objects $s,t \in \A$ which is natural in
both variables $s$ and $t$ such that the assignment respects the
symmetry natural transformations $\beta$ of $\A$ and $\B$ in the
following sense:
\[
F(\beta_{s,t}) \circ \phi^F_{s,t} =
\phi^F_{t,s} \circ \beta_{F(s),F(t)} 
\]
and also respects the associativity in the following sense:
\[
 \phi^F_{s,t+u} \circ (id_{F(s)} + \phi^F_{t,u})\circ \alpha^{\B}_{F(s),F(t),F(u)}
 = F(\alpha^{\A}_{s,t,u}) \circ \phi^F_{s+t,u} \circ (\phi^F_{s,t} + id_u),
\]
for each triple of objects $s,t,u \in \A$ and where $\alpha^\A$
and $\alpha^\B$ are the associativity natural transformations of
$\A$ and $\B$ respectively.

A homomorphism between two Picard groupoids $F: \A \to \B$ will be
called a \emph{strict homomorphism} if the coherence morphisms
$\phi^F_{s,t}$ are identities for all pairs $s,t \in \A$ and $F(0) =
0$.

Given
two homomorphisms $F$ and $F': \A \to \B$, a \emph{monoidal natural
transformation from $F$ to $F'$} is a natural transformation
$\theta: F \Rightarrow F'$ which is compatible with the coherence
morphisms of both homomorphisms $F$ and $F'$ in the following
sense:
\[
\phi^{F'}_{s,t} \circ (\theta_s + \theta_t) = \theta_{s+t} \circ \phi^F_{s,t},
\]
for all pairs $s, t \in \A$.


%
%

Given any two Picard groupoids $\A$ and $\B$, the category whose
objects are all homomorphisms from $\A$ to $\B$ and whose morphisms
are monoidal natural transformations
between these homomorphisms has the structure of a Picard groupoid
which we denote by $[\A,\B]$, see \cite{schmitt} for a detailed proof of this
assertion. One can associate another Picard groupoid with $\A$ and $\B$
which we denote by $\A \otimes \B$, and which will be called the
\emph{tensor product}. We will not recall its construction, which
is rather elaborate, see \cite{schmitt}, but mention that
the tensor product 2-functor is determined by an adjunction
\[
[\A, [\B, \C]] \xrightarrow{\sim} [ \A \otimes \B, \C]
\]
in the bicategory of Picard groupoids. This bicategory also has a unit
object $I$ for the monoidal structure. The bicategory of Picard
groupoids, not only has an internal hom as indicated above, but it has
the structure of a $\Pic$-category, see appendix
\ref{pic-category-appendix} for a definition of a
$\Pic$-category. More precisely, Picard groupoids, homomorphisms
between Picard groupoids and monoidal natural transformations between
homomorphisms form a $\Pic$-category which we denote by
$\Pic$. Further, $\Pic$ is the archtype example of a
$\Pic$-category. Our point of view on $\Pic$ is that it is the analog
of the category of Abelian groups, $\mathbf{Ab}$, in the world of
bicategories.

The groupoid of lines, i.e., one-dimensional vector spaces, and
$G$-torsors for a given abelian group $G$ have natural structures of
Picard groupoids with respect to tensor products and the product of
torsors over $G$, respectively. We will later focus our attention on
the Picard groupoid $\L$ of hermitian lines, where the hermitian form
on the tensor product of hermitian lines is the tensor product of the
hermitian forms one each line.

Let $X_\bullet$ be a simplicial set and $\A$ be a Picard groupoid. We
will define cohomology $H^\bullet (X_\bullet, \A)$ of $X_\bullet$ with
values in $\A$, following \cite{carrasco-martinez-moreno} and
\cite{dr-mm-v}. Similar cohomology may be defined for topological
spaces and, more generally, with coefficients in sheaves of Picard
groupoids.

Let us associate with $X_\bullet$ and $\A$ a \emph{cosimplicial Picard
  groupoid}, that is to say, a cosimplicial object in the category of
Picard groupoids, defined as the ``mapping space'' $\A^{X_\bullet} :=
\Map (X_\bullet, \A)$: for each $n \ge 0$, we define the Picard
groupoid $\A^{X_n}$ whose objects are maps $X_n \to \Ob \A$, morphisms
are maps $X_n \to \Mor \A$, and the tensor product and morphism
composition are defined ``point-wise.''  The cosimplicial structure is
comprised of homomorphisms of Picard groupoids:
\[
\xymatrix{ \A^{X_0} \ar@/_/[rr]|{d_0^*} \ar@/_/@<-1ex>[rr]_{d_1^*} &&
  \A^{X_1} \ar@/_/[ll]|{s_0^*} \ar@/_/[rr]|{d_0^*} \ar@/_/@<-1ex>[rr]
  \ar@/_/@<-2ex>[rr]_{d_2^*} && \A^{X_2} \ar@/_/[ll]|{s_0^*}
  \ar@/_/@<-1ex>[ll]_{s_1^*} \ar@/_/[rr]|{d_0^*} \ar@/_/@<-1ex>[rr]
  \ar@/_/@<-2ex>[rr] \ar@/_/@<-3ex>[rr]_{d_3^*} && \ar@/_/[ll]|{s_0^*}
  \ar@/_/@<-1ex>[ll] \ar@/_/@<-2ex>[ll]_{s_2^*} \dots },
\]
where the coface and codegeneracy homomorphisms $d_i^*: \A^{X_n} \to
\A^{X_{n+1}}$ and $s_j^* : \A^{X_{n+1}} \to \A^{X_n}$ are obtained by
composition with the face maps $d_i: X_{n+1} \to X_n$ and degeneracy
maps $s_j: X_n \to X_{n+1}$ of the simplicial set $X_\bullet$,
respectively.

Now, by taking alternating sums, we obtain a \emph{$($cochain$)$
  complex of Picard groupoids}:
\begin{equation*}
  C^\bullet (X_\bullet, \A): \xymatrix{ 0 \ar[r] &
    \A^{X_0} \ar[r]^d \rruppertwocell<10>^0{\omit}
    & \A^{X_1} \ar[r]^d \rrlowertwocell<-10>_0{<3>\chi} & \A^{X_2}
    \lltwocell<\omit>{<3>\chi} \ar[r]^d \rruppertwocell<10>^0{\omit} &
    \A^{X_3} \ar[r]^d & \lltwocell<\omit>{<3>\chi} \dots }
\end{equation*}
with $d = \sum_{i=0}^{n+1} (-1)^i d_i^* : \A^{X_n} \to \A^{X_{n+1}}$
and a monoidal transformation $\chi: d^2 \Rightarrow 0$, obtained in a
unique way from the structure isomorphisms $\alpha$, $m$ and $n$. This
system of coboundary homomorphisms $d$ and monoidal transformations
$\chi$ is \emph{coherent}, i.e., $\chi d = d \chi$ as 2-cells $d^3
\Rightarrow 0$. Let $\2ch$ denote the $\Pic$-category of complexes
of Picard groupoids. The objects of $\2ch$ are complexes of
Picard groupoids. A 1-morphism between $\A^{\bullet},
\B^{\bullet} \in Ob(\2ch)$, pictured below:
\begin{equation*}
  \xymatrix{ \dots  
    \A^{n-1} \ar[d]_{f^{n-1}} \ar[r]^{d_{\A}} \rruppertwocell<10>^0{\omit}
    & \A^{n} \ar[d]_{f^{n}} \ar[r]^{d_{\A}} \ar@{=>}_{\phi^{n}}(12,-5)*{};(9,-9)*{}
    & \A^{n+1} \ar[d]^{f^{n+1}} \lltwocell<\omit>{<3>\chi_{\A}}
    \ar@{=>}^{\phi^{n+1}}(29,-5)*{};(26,-9)*{} \dots
    \\
    \dots  
    \B^{n-1} \ar[r]_{d_{\B}} \rrlowertwocell<-10>_0{\omit}
    & \B^{n} \ar[r]_{d_{\B}}  & \B^{n+1} \lltwocell<\omit>{<-3>\chi_{\B}} \dots },
\end{equation*}
is a pair $F = (f, \phi)$, where $f$ is a sequence of homomorphisms
$f^{n}: \A^{n} \rightarrow \B^{n}$ and $\phi$ is a sequence of
monoidal natural transformations $\phi^{n}:f^{n}d_{\A} \Rightarrow
d_{\B}f^{n-1}$ in $\Pic$, satisfying the following coherence conditions
$\phi^{n+1}d_{\A} = d_{\B}\phi^{n}$ and $(f^{n+1}\chi_\A) \circ
(\phi^{n+1}d_\A) \circ (d_\B \phi^{n}) = \chi_\B f^{n-1}$.
A 2-morphism $(f, \phi)\Rightarrow(f', \phi')$ is a sequence
$\lbrace \gamma_n \rbrace_{n \in \mathbb{Z}}$, where $\gamma_n:f^n
\Rightarrow f'^n$ is a monoidal natural transformation, for all $n \in \mathbb{Z}$,
and the following coherence condition is satisfied:
$(\gamma_{n+1}d_\A) \circ \phi_n = \phi'_n \circ (d_\B \gamma_n)$.
It would be useful to describe an alternative, equivalent, notion of
a $2$-morphism in $\2ch$ which is a generalization of \emph{cochain homotopy}
to the Picard groupoid context. In this notion, a $2$-morphism is
also a pair $H = (h, \psi)$, where $h^{n}: \A^{n} \rightarrow
\B^{n-1}$ and $\psi$ is a sequence of monoidal natural transformations
$\psi^{n}: d_{\B}h^{n} + f^{n} \Rightarrow f'^{n} + h^{n+1}d_{\A}$
satisfying an obvious coherence condition.
We leave the establishment of an equivalence between the two notions
of $2$-morphisms in $\2ch$ as an excercise for an interested reader.

The cohomology $H^\bullet (X_\bullet, \A)$ of $X_\bullet$ with
coefficients in a Picard groupoid $\A$ is defined as the cohomology of
the complex $(\A^{X_\bullet}, d, \chi)$ of Picard groupoids. The
cohomology of a complex of Picard groupoids may be defined as
follows. In principle, to define the $n$th cohomology $H^n (X_\bullet,
\A)$, we want to take the kernel $\Ker d$ of the homomorphism
$d:\A^{X_n} \to \A^{X_{n+1}}$ and then the cokernel of the
homomorphism $d': \A^{X_{n-1}} \to \Ker d$ induced by $d:\A^{X_{n-1}}
\to \A^{X_{n}}$, but these need to be defined in a suitable
categorified sense. In particular, the kernels, cokernels, and
cohomology will depend on two subsequent coboundary homomorphisms $d$
as well as $\chi$ and be Picard groupoids. The objects of the category
$\Ker (d,\chi)$ (of $n$-\emph{cocycles}) are pairs $(a,\phi)$ in which
$a$ is an object of $\A^{X_n}$ and $\phi: da \to 0$ is a morphism in
$\A^{X_{n+1}}$ satisfying a \emph{cocycle condition}:
\[
d (\phi) = \chi_a: d^2 (a) \to 0.
\]
A morphism $(a,\phi) \to (a',\phi')$ in $\Ker (d,\chi)$ is given by a
morphism $f: a \to a'$ in $\A^{X_n}$ such that $ \phi'\circ d (f) =
\phi$. The monoidal structure on $\Ker (d,\chi)$ is inherited from
that of $\A$. The kernel $\Ker (d,\chi)$ naturally participates in a
complex of Picard groupoids, as follows:
\begin{equation*}
  \xymatrix{ \A^{X_{n-2}} \ar[r]^d \rruppertwocell<10>^0{\omit}
    & \A^{X_{n-1}} \ar[r]^{d'}  & \Ker (d,\chi) \lltwocell<\omit>{<3>\chi'}
  }.
\end{equation*}
The cohomology $H^n (X_\bullet, \A)$ is defined as the cokernel
$\Coker (d',\chi')$ in this complex. The cokernel $\Coker (d',\chi')$
is a Picard category whose objects are the same as those of $\Ker
(d,\chi)$, i.e., of the type $(a,\phi)$, where $a$ is an object of
$\A^{X_{n}}$ and $\phi: da \to 0$ is a morphism in $\A^{X_{n+1}}$
satisfying the cocycle condition above. A morphism $(a,\phi) \to
(a',\phi')$ in $\Coker (d',\chi')$ is given by an equivalence class of
pairs $(b,f)$, where $b$ is an object of $\A^{X_{n-1}}$ and $f:
(a,\phi) \to (d'b + a', \chi_b + \phi')$ is a morphism in $\Ker
(d,\chi)$. Two morphisms $(b,f)$ and $(b',f'): (a,\phi) \to
(a',\phi')$ are \emph{equivalent}, if there is a pair $(c,g)$ with $c$
being an object of $\A^{X_{n-2}}$ and $g: b \to dc + b'$ a morphism in
$\A^{X_{n-1}}$ such that the following diagram commutes:
\[
\begin{CD}
  a @>f>> d'b + a' @>{d'(g)+ \id}>> (d'dc
  +d'b')+a'\\
  @Vf'VV & & @VV{\alpha}V \\
  d'b' + a' @<<{l}< 0 + (d'b'+a') @<<{\chi'_c + \id + \id}< dd'c
  +(d'b'+a') .
\end{CD}
\]
One can check that $ \pi_0 (H^n (X_\bullet, \A)) \cong \pi_1
(H^{n+1}(X_\bullet, \A))$.

The \emph{simplicial homology of a simplicial set $X_\bullet$ with
  coefficients in a Picard groupoid $\A$} may be defined similarly by
looking at the \emph{simplicial Picard groupoid} $\A X_\bullet$ whose
$n$-simplices are formal ``linear combinations'' $a_1 s_1 + \dots +
a_k s_k$ of pairwise distinct elements $s_1, \dots, s_k$ in $X_n$ with
coefficients $a_1, \dots, a_k$ in $\A$. Perhaps, a better way of
looking at $\A X_\bullet$ is to view it as $\A$-valued functions on
$X_\bullet$ with finite support and apply the same treatment to it as
that for $\A^{X_\bullet}$. In particular, summing up the face
homomorphisms gives rise to a \emph{chain complex of Picard groupoids}
$C_\bullet (X_\bullet, \A)$, which determines the homology Picard
groupoids $H_n (X_\bullet, \A)$ for $n \ge 0$.

When $A$ is an abelian group, we will think of it as a \emph{discrete}
Picard groupoid, denoted $A[0]$, with $A$ being the set of objects and
identities being the only morphisms, so as $\pi_0 (A[0]) = A$ and
$\pi_1 (A[0]) = 0$. Then the (co)homology with coefficients in the
Picard groupoid $A[0]$ will be related to the usual simplicial
(co)homology with coefficients in the group $A$ as follows:
\begin{eqnarray*}
\pi_0 H^\bullet (X_\bullet; A[0]) & = & H^\bullet (X_\bullet; A),\\
\pi_0 H_\bullet (X_\bullet; A[0]) & = & H_\bullet (X_\bullet; A).
\end{eqnarray*}

\subsection{Relative cohomology}
\label{relativecohomology}
Let $\A \in \Pic$, let $X_{\bullet}$ be a simplicial set, let
$Y_{\bullet} \subset X_{\bullet}$ be a simplicial subset. There is an
inclusion map $Y_{\bullet} \hookrightarrow X_{\bullet}$ in that category
of simplicial sets. This inclusion induces a $1$-morphism
\[
i_{\bullet}:C_{\bullet}(Y_{\bullet}, \A) \hookrightarrow C_{\bullet}(X_{\bullet},
\A)
\]
in $\2ch$. We define relative homology
$H_{\bullet}(X_{\bullet}, Y_{\bullet}, \A)$ to be the homology of the
2-chain complex given by the cokernel of $i_{\bullet}$ in $\2ch$.
We call this 2-chain complex, given by the cokernel, a \emph{relative 2-chain
complex} so $H_{\bullet}(X_{\bullet}, Y_{\bullet}, \A)$ is the homology of
the relative 2-chain complex $C_{\bullet}(X_{\bullet}, Y_{\bullet}, \A)$.
The $nth.$ degree of the relative
2-chain complex is the Picard groupoid given by the cokernel, in the
category of Picard groupoids, of the map
$i_{n}:C_{n}(Y_{\bullet}, \A) \hookrightarrow C_{n}(X_{\bullet}, \A)$.
Relative cohomology is defined similarly, $H^{\bullet}(X_{\bullet},
Y_{\bullet}, \A)$ is the cohomology of the relative 2-cochain complex
given by the cokernel of the following map, induced by the inclusion
$Y_{\bullet} \hookrightarrow X_{\bullet}$
\[
i^{\bullet}:C^{\bullet}(Y_{\bullet}, \A) \to C^{\bullet}(X_{\bullet},\A).
\]
The objects of $i^{\bullet}(C^{n}(Y_{\bullet}, \A))$ are those functions,
$X_n \to Ob(\A)$, which  vanish outside of $Y_{n}$.
$C^{n}(X_{\bullet}, Y_{\bullet}, \A)$ is a Picard subgroupoid
of $C^{n}(X_{\bullet}, \A)$ whose objects are the same as those of
$C^{n}(X_{\bullet}, \A)$. A morphisms in $C^{n}(X_{\bullet},
Y_{\bullet}, \A)$ is a certain equivalence class of morphisms in
$C^{n}(X_{\bullet}, \A)$.  The cokernel also gives a $1$-morphism, in $\2ch$,
$p^{\bullet}:C^{\bullet}(X_{\bullet}, \A) \to C^{\bullet}(X_{\bullet}, Y_{\bullet},
\A)$ and a $2$-morphism $\phi^{\bullet}: p^{\bullet} \circ i^{\bullet} \Rightarrow
0:C^{\bullet}(Y_{\bullet}, \A) \to C^{\bullet}(X_{\bullet}, Y_{\bullet}, \A)$,
where $0$ is the zero homomorphism. If $\alpha
\in Ob(C^{n}(Y_{\bullet}, \A))$ then the natural transformation
$\phi^n$ assigns to $\alpha$, a morphism $i(\alpha) \to 0$ in
$C^{n}(X_{\bullet}, Y_{\bullet}, \A)$.  In other words those objects
of $C^{n}(Y_{\bullet}, \A)$ are isomorphic to the zero object in
$C^{n}(X_{\bullet}, Y_{\bullet}, \A)$.

\subsection{Functoriality}
\label{functoriality}

The $n$th cohomology (and $n$th homology) defined above is a functor of
$\Pic$-categories, see appendix \ref{pic-category-appendix},
$H^{n}:\2ch \rightarrow \Pic$. Moreover, every $F \in
\Mor_{\2ch}(\A^{\bullet}, \B^{\bullet})$ determines a morphism
$H^{\bullet}(F) \in \Mor_{\2ch}( H^\bullet (\A^\bullet ),\newline H^{\bullet}
(\B^\bullet))$ on cohomology. This fact follows from properties of
relative kernels and cokernels; for a direct proof of this fact see
\cite{dr-mm-v}.

Note that the described cohomology and homology are (strictly)
functorial with respect to simplicial maps. If $\A$ is Picard
groupoind and $f: X_\bullet \to Y_\bullet$ is a simplicial map, then
we get a strict morphism between the corresponding cochain complexes
of Picard groupoids $f^*: C^\bullet (Y_\bullet, \A) \to C^\bullet
(X_\bullet, \A)$, which yields a strict morphism on cohomology $f^*:
H^n (Y_\bullet, \A) \to H^n (X_\bullet, \A)$ for $n \ge 0$. Moreover,
a simplicial homotopy between two simplicial maps induces a monoidal
natural transformation on cohomology, \emph{cf}.\ \cite[Proposition
2.1]{bullejos-carrasco-cegarra} and \cite[Proposition
2.3(i)]{carrasco-martinez-moreno} and the discussion of 2-morphisms in
$\2ch$ in Section~\ref{coh}. The same statements are true for
homology.

\subsection{The long 2-exact sequence}

We begin this subsection by recalling the notion of a short 2-exact
sequence of Picard groupoids. Here we will only recall this notion in
a subcategory of $\Pic$ which has the same objects as $\Pic$ and whose
morphisms are homomorphisms which preserve the unit of addition.  For
the general case see \cite{BV,Rouss}. A complex
\[
\xymatrix{ 
0 \ar[r] &\A \ar[r]_F \rruppertwocell<10>^0{\omit}  &\C \ar[r]_G
&\B \lltwocell<\omit>{<3>\phi} \ar[r] &0
 \\
}
\]
is called a \emph{short $2$-exact sequence of Picard groupoids} if the
unique morphism $\overline{G}: \Coker(F, \id_0) \to \B$ is full and
faithful and further, $\pi_1 (\Ker(F, \phi)) = 0$ and $\pi_0(\Coker(G,
\phi)) = 0$.

\begin{df}
  A 2-\emph{exact sequence of complexes of Picard groupoids} is a diagram
\begin{equation}
\label{short-2-exact-complex}
\xymatrix{ 
0 \ar[r] &\A^{\bullet} \ar[r]_{F^\bullet} \rruppertwocell<10>^0{\omit}  &\B^{\bullet}
\ar[r]_{G^\bullet} & \C^{\bullet} \lltwocell<\omit>{<3>\phi^\bullet} \ar[r] &0,
 \\
}
\end{equation}
where $F^\bullet$ and $G^\bullet$ are $1$-morphisms and $\phi^\bullet$
is a $2$-morphism in $\2ch$, such that in every degree, the above
diagram in $\2ch$, reduces to a short $2$-exact sequence of Picard
groupoids.
\end{df}

The following example of a short $2$-exact sequence is of particular
interest and would be referenced frequently.
\begin{ex}
\label{inclusion-of-simplicial-sets}
Let $X_{\bullet}$ be a simplicial set and let $Y_{\bullet} \subset
X_{\bullet}$ be a simplicial subset. Then for any Picard groupoid
$\A$, there is a morphism $i:\C^{\bullet}(Y_{\bullet}; \A) \to
\C^{\bullet} (X_{\bullet}; \A)$ of (cochain) complexes of Picard
groupoids. This morphism determines a short $2$-exact sequence of
complexes of Picard groupoids:
\begin{equation}
\label{cohomology-short-2-exact-seq}
\xymatrix{ 0 \ar[r] &{\C^{\bullet}(X_{\bullet}, Y_{\bullet}; \A)}
  \ar[r] \rruppertwocell<10>^0{\omit}
  &{\C^{\bullet}(X_{\bullet}; \A)} \ar[r]_{i}
  &{\C^{\bullet}(Y_{\bullet}; \A)} \lltwocell<\omit>{<3>\pi
    \ \ } \ar[r] &0.  \\
}
\end{equation}
The inclusion of simplicial sets induces another morphism of (chain)
complexes of Picard groupoids, $i: \C_{\bullet}(Y_{\bullet}; \A) \to
\C_{\bullet} (X_{\bullet}; \A)$.  This morphism determines a short
$2$-exact sequence of (chain) complexes of Picard groupoids:
\begin{equation}
\label{homology-short-2-exact-seq}
\xymatrix{ 0 \ar[r] &{\C_{\bullet}(Y_{\bullet}; \A)} \ar[r]_{i}
  \rruppertwocell<10>^0{\omit} &{\C_{\bullet}(X_{\bullet}; \A)}
  \ar[r] &{\C_{\bullet}(X_{\bullet}, Y_{\bullet}; \A)}
  \lltwocell<\omit>{<3>\pi \ \ } \ar[r] &0.
  \\
}
\end{equation}
\end{ex}

A short 2-exact sequence of complexes \eqref{short-2-exact-complex}
has an associated \emph{long $2$-exact sequence} of cohomology
\begin{equation}
\label{long-2-exact-sequence}
\xymatrix@C=15pt{ \dots \to
  H^n(\A^{\bullet}) \ar[r]_{\;\;\;\;\;\;  H^n(F)} \rruppertwocell<10>^0{\omit}
  & H^n(\B^{\bullet}) \ar[r]^{H^n(G)} \rrlowertwocell<-10>_0{<3>\ \ \Sigma^n } & H^n(\C^{\bullet})
  \lltwocell<\omit>{<3>H^n(\phi) \ \ \ \ \ } \ar[r]^{\partial^n} \rruppertwocell<10>^0{\omit} &
  H^{n+1}(\A^{\bullet}) \ar[r]_{\!\!\!\!\! H^{n+1}(F)} & \lltwocell<\omit>{<3>\Psi^n \ \ } H^{n+1}(\B^{\bullet}) \to
  \dots }
\end{equation}
We will briefly outline the construction of the 1-morphism
$\partial^n$ and the $2$-morphism $\Psi^n$ here. For a more
elaborate description of the various components of this long exact
sequence, we refer the interested reader to Section 4 of
\cite{dr-mm-v}. For an outline we will refer to the following diagram:
\begin{equation*}
  \xymatrix@C=22pt@!C{ \!\!\!\!\!\!\!\!\!\!\!\!\!\! \dots  \to
    \A^{n} \ar[d]_{d^n_{\A}} \ar[r]^{f^n} \rruppertwocell<10>^0{\omit}
    & \B^{n} \ar[d]_{d^n_{\B}} \ar[r(0.8)]^{g^n} \ar@{<=}_{\lambda^{n}}(20,-5)*{}
    ;(16,-9)*{}  & \;\;\;\;\;\;\;\;\;\;\;  \C^{n} \ar[d]^{d^n_{\C}} \lltwocell<\omit>{<3>\!\! \phi^{n}}
    \ar@{<=}^{\mu^{n}}(57,-5)*{};(53,-9)*{} \;\;\; \to  \dots
    \\
    \!\!\!\!\!\!\!\!\! \dots \to
    \A^{n+1} \ar[r]_{f^{n+1}} \rrlowertwocell<-10>_0{<2.7>
      \!\!\!
      \!\!\!\!\!\!\!\!\!\!\!\!\!\!\!\!\!\!\!\!\!\!
\phi^{n+1}}
    & \B^{n+1} \ar[r(0.8)]_{g^{n+1}}  & \;\;\;\;\;\;\;\;\;\;  \C^{n+1} \to  \dots }
\end{equation*}
Let $(C_n, c_n: d_{\C}^n(C_n) \to 0)$ be an object in $Ker(d_{\C}^n)$;
since $\pi_1(\Coker(g_n,\phi^{n})) = 0$, there is a $B_n \in \B^n$ and
$i: g^n(B_n) \to C_n$. Since the following pair
\[
 (d_{\B}^n(B_n), c_n \circ d_{\C}^n(i) \circ \mu_n(B_n):g^{n+1}(d_{\B}^n(B_n)) \to d_{\C}^n(g^n(B_n))
\to d_{\C}^n(C_n) \to 0)
\]
is an object of $Ker(g^{n+1})$ and the factorization of
$f^{n+1}:\A^{n+1} \to \B^{n+1}$ through $Ker(g^{n+1})$ is an
equivalence, there are $A_{n+1} \in \A^{n+1}$ and $j: f^{n+1}(A_{n+1})
\to d^n_{\B}(B_n)$ such that
\[
g^{n+1}(j) \circ c_n \circ d_{\C}^n(i) \circ \mu_n(B_n)  =
\phi^{n+1}(A_{n+1}).
\]
We now need an arrow $a_{n+1}:d^{n+1}_{\A} \to 0$. Since the
factorization ${f'}^{n+2}$ of $f^{n+2}$ through $Ker(g^{n+2})$ is an
equivalence of categories, it is enough to find an arrow
${f'}^{n+2}(d^{n+1}_{\A}(A_{n+1})) \to {f'}^{n+2}(0)$. This is given
by the following:
\[
 \chi_{\B}(B_n) \circ d^{n+1}_\B(j) \circ \lambda^{n+1}(A_{n+1}): f^{n+2}
(d^{n+1}_\A(A_{n+1})) \to 0 \cong f^{n+2}(0).
\]
We put $\partial^{n}(C_n, c_n) := (A_{n+1}, a_{n+1})$. This is an
object of $H^{n+1}{\A^{\bullet}}$: the condition $d^{n+1}_\A(a_{n+1})
= \chi_\A (A_{n+1})$ can be easily checked by applying the faithful
functor $f^{n+3}$.  The arrow function of the functor $\partial^n$ has
a much more elaborate description and moreover it is not used in the
construction of our theory. We will refer the interested reader to
\cite{dr-mm-v}.

Before we can describe a construction of the $2$-morphism $\Psi^n$, we
need another description of $H^{n}(\C^\bullet)$. Since $(f^n, \phi^n,
g^n)$ is a $2$-short exact sequence, $\C^n$ is equivalent to the
cokernel of $f^n$, we get the following alternative description of
$H^n(\C^\bullet)$. An object is a pair
\[
(B_n \in \B^n, [A_{n+1} \in \A_{n+1}, a_{n+1}: d^n_{\B}(B_n) \to
f^{n+1}(A_{n+1})]),
\]
where $[A_{n+1}, a_{n+1}] \in \Mor_{\Coker (f^{n+1},\id_0)}(d^n_{\B}(B_n), 0)$,
such that there exists an arrow $t^{n+2}:d^n_{\A}(A_{n+1}) \to 0$
making the following diagram commutative
\[
\begin{CD}
  d^{n+1}_{\B}(d^n_{\B}(B_n)) @>{d^{n+1}_{\B}(a_{n+1})}>>
  d^{n+1}_{\B}(f^{n+1}(A_{n+1}))\\
  @V{\chi_\B(B_n)}VV @VV{(\lambda^{n+1})^{-1}(A_{n+1})}V \\
  0 @<<{f^{n+2}(t^{n+2})}< f^{n+2}(d^{n+1}_{\A}(A_{n+1})) .
\end{CD}
\]
Note that $t^{n+2}$ is necessarily unique because $f^{n+2}$ is faithful.
Now we begin the construction of $\Psi^n$, given an object
\[
(B_n \in \B^n, [A_{n+1} \in \A^{n+1}, a_{n+1}: d^n_{\B}(B_n) \to
f^{n+1}(A_{n+1})]),
\]
in $H^n(C^\bullet)$, we apply $\partial^n$ and $H^{n+1}(f)$ and
obtain the following object of $H^{n+1}(\B^\bullet)$:
{\small
\[
 (f^{n+1}(A_{n+1}), f^{n+2}(t^{n+2}) \circ \lambda^{-1}_{n+1}
 (A_{n+1}):d^{n+1}_{\B}(f^{n+1}(A_{n+1})) \to f^{n+2}(d^{n+1}_{\A}
 (A_{n+1})) \to 0).
\]
}This object is naturally isomorphic to the unit of the addition
$0 \in H^{n+1}(\B^\bullet)$ via the following morphism which we
take as the definition of $\Psi^n$ on the object
$(B_n, [A_{n+1}, a_{n+1}])$
\[
 \Psi^n(B_n, [A_{n+1}, a_{n+1}]) := [B_n \in \B^n, a_{n+1}^{-1}:
 f^{n+1}(A_{n+1}) \to d_\B^n(B_n)].
\]
The following example describes the images under the morphism
$\partial^n$ and the natural transformation $\Psi^n$ of an object in
degree $n$ of the cohomology sequence associated to the $2$-short
exact sequence \eqref{homology-short-2-exact-seq}.

\begin{ex}
\begin{sloppypar}
  Let $X$ be a compact finite-dimensional manifold with boundary
  $\partial X$.  We denote by $H_n(X, \partial X;\L)$ the $n$th
  homology Picard groupoid of the chain complex
  $C_{\bullet}(X_\bullet, \partial X_{\bullet}; \L)$. Let $(X_{n},
  [X'_{n-1},x'_{n-1}]) \in \Ob \ H_n(X, \partial X;\L)$, where $X_n
  \in \Ob \ C_n(X_\bullet; \L)$ and the morphism
  $[X'_{n-1},x'_{n-1}]:d(X_n) \to 0$ in $\Mor \ C_{n-1}(X_\bullet;
  \partial X_\bullet; \linebreak[0] \L)$ consists of an object
  $X'_{n-1} \in C_{n-1} (\partial X_\bullet;\L)$ and a morphism
  $x'_{n-1}:d^n(X_n) \to X'_n$ $\in \Mor \ C_{n-1}(X_\bullet;
  \L)$. The coboundary $\partial_n (X_{n}, [X'_{n-1},x'_{n-1}])$ is
  the pair $(X'_{n-1}, \linebreak[0] x'_{n-1}) \in \Ob \
  H_{n-1}(\partial X;\L)$. The natural transformation $\Psi_n(X_{n},
  [X'_{n-1},x'_{n-1}])$ is a morphism in $ \Hom_{H_{n-1}
    (X;\L)}((X'_{n-1}, x'_{n-1}), 0)$ given by the equivalence class
  $[X_n,(x'_{n-1})^{-1}]$. Thus every object in $H_n(X, \partial
  X;\L)$ produces a morphism in $H_{n-1}(X ;\L)$.
\end{sloppypar}
\end{ex}

\subsection{The Cap Product}
In this section we develop a cap product between cohomology with
coefficients in a Picard groupoid and homology with coefficients in
the Picard groupoid $\Z[0]$:
\[
\cap : H_\bullet (X_\bullet, \Z[0]) \otimes H^\bullet(X_\bullet, \A)
\rightarrow H_\bullet (X_\bullet, \A).
\]
In order to do that, we will define a chain map i.e a morphism in $\2ch$
\[
H_\bullet (X_\bullet, \Z[0]) \to [H^\bullet(X_\bullet, \A), H_\bullet (X_\bullet, \A)].
\]
We start by defining the following chain map
\begin{equation}
\label{chain-map-cap-product} 
\cap^{ch}:C_\bullet (X_\bullet, \Z[0]) \to [C^{\bullet}(X_\bullet, \A) , C_\bullet (X_\bullet, \A)]
\end{equation}
where the right hand side is the chain complex $[C^\bullet (X_\bullet,
\A),C_\bullet (X_\bullet, \A)]_{\bullet}$ defined in Appendix 2. We
define the map in degree $p$ as follows: On objects the chain map is
given by
\[
 \sigma_{q} \mapsto \underset{q \ge p}\prod F_{q},
\]
where $F_{q} \in \Mor(\Pic)$ is defined on objects by
\[
 F_{q}: \alpha \mapsto \alpha(d_{f}^{p}(\sigma_{q}))d_{l}^{q-p}(\sigma_{q}),
\]
where $d_{f}^{p}$ and $d_{l}^{q-p}$ are the restrictions of $d:C_{q+1}
(X_\bullet, \Z[0]) \rightarrow C_{q} (X_\bullet, \Z[0]) $ to the
simplex determined by the first $p+1$ vertices and the last $q-p+1$
vertices of $\sigma_{q}$ respectively.  On morphisms, $F_{q}$ given by
\[
  F_{q}:\lbrace\lbrace\alpha
    \to \beta \rbrace \mapsto \lbrace \alpha(d_{f}^{p}(\sigma_{q}))d_{l}^{q-p}(\sigma_{q})
    \to  \beta(d_{f}^{p}(\sigma_{q}))d_{l}^{q-p}(\sigma_{q}) \rbrace
\]
The map on the right side is determined by the natural
transformation $\alpha \to \beta$.
This chain map induces a map on homology
\begin{equation}
 H_{\bullet}(X_\bullet, \Z[0]) \to H_{\bullet}( [C^\bullet (X_\bullet, \A), C_\bullet (X_\bullet, \A)]).
\end{equation}

Composition with the following obvious morphism gives us the desired chain map
\begin{equation}
\label{homology-to-homology-of-internal-hom}
 H_{\bullet}( [C^\bullet (X_\bullet, \A), C_\bullet (X_\bullet, \A)]) \to [H^\bullet (X_\bullet, \A), H_\bullet (X_\bullet, \A)].
\end{equation}

\subsection{Relative cap product}

We now construct a relative version of the cap product. The $2$-functor
$[C^{\bullet}(X_{\bullet}; \A), -]:\2ch \to \2ch$ and the chain map
\eqref{chain-map-cap-product}, determine a composite $1$-morphism
and a $2$-morphism $\phi_{\bullet}$ in $\2ch$
\[ 
\xymatrix@C=1pt@!C{
C_{\bullet}(Y_{\bullet}; \Z[0]) \ar[r]^{i_\bullet} \rruppertwocell<10>^0{\omit} 
& C_{\bullet}(X_{\bullet}; \Z[0]) \ar[r]^{\cap^{ch}_{rel} \ \ \ \ \ \ } &[C^{\bullet}(X_{\bullet}; \A),
C_{\bullet}(X_{\bullet}, Y_{\bullet}; \A)] \lltwocell<\omit>{<3>\phi_{\bullet} \ \ }. \\
}
\]
In order to define the $1$-morphism $\cap^{ch}_{rel}$ and the $2$-morphism $\phi_\bullet$
in the above diagram, we need to define a
restriction of the chain map \ref{chain-map-cap-product}. The image of the restriction
of this chain map to $i_{\bullet}(C_{\bullet}(Y_{\bullet}; \Z[0]))$ is
contained in the $2$-(chain) complex $[C^{\bullet}(X_{\bullet}; \A),
C_{\bullet}(Y_{\bullet}; \A)]$, this determines the following commutative diagram
\[ 
\xymatrix@C=65pt@R=35pt{
i_\bullet(C_{\bullet}(Y_{\bullet}; \Z[0]))
\ar[r]^{\cap^{ch}|_{i_\bullet(C_{\bullet}(Y_{\bullet}; \Z[0]))} \ \ } \ar[rd]_{\cap^{ch}_{Y_\bullet}}
&[C^{\bullet}(X_{\bullet}; \A),C_{\bullet}(X_{\bullet}; \A)] \\
&[C^{\bullet}(X_{\bullet}; \A),C_{\bullet}(Y_{\bullet}; \A)] \ar[u]_{[C^{\bullet}(X_{\bullet}; \A),i_\bullet]} 
}
\]
The following composite chain map will be called the \emph{restricted relative cap product chain map}.
\begin{equation}
 \label{restricted-cap-product}
\cap^{ch}_{res}:C_{\bullet}(Y_{\bullet}; \Z[0]) \overset{i_\bullet}
\to i_\bullet(C_{\bullet} (Y_{\bullet}; \Z[0])) \overset{\cap^{ch}_{Y_\bullet}}
\to [C^{\bullet}(X_{\bullet}; \A), C_{\bullet} (Y_{\bullet}; \A)]
\end{equation}

The $2$-morphism $\phi_{\bullet}$ is the composition $\cap^{ch}_{res}
\circ ([C^{\bullet}(X_{\bullet}; \A),\pi_{\bullet}^{\A})])$ as
described in the following diagram
\[ 
\xymatrix@C=10pt{
C_{\bullet}(Y_{\bullet}; \Z[0]) \rruppertwocell<12>^0{\omit}
\ar[d]_{\cap^{ch}_{res}} \ar[r]^{i_\bullet} &C_{\bullet}(X_{\bullet}; \Z[0])
\ar[d]_{\cap^{ch}} \ar[rd]_{\cap^{ch}_{rel}} \ar[r]^{p_\bullet} &C_{\bullet}(X_{\bullet}, Y_{\bullet};
\Z[0]) \ar@{=>}^{\ \ \lambda_\bullet}(72,-4)*{} ;(67,-6)*{}  \ar@{-->}[d]^{u}
 \lltwocell<\omit>{<3> \pi_{\bullet} }.\\
[C^{\bullet}(X_{\bullet}; \A), C_{\bullet}(Y_{\bullet}; \A] \ar[r] \rrlowertwocell
<-12>_0{<3> \ \ \ \ \ \ \ \ \ \ \ \ \ \ \ [C^{\bullet}(X_{\bullet}; \A),\pi_{\bullet}^{\A})] }
& [C^{\bullet}(X_{\bullet}; \A),C_{\bullet}(X_{\bullet}; \A) \ar[r]
 &[C^{\bullet}(X_{\bullet}; \A), C_{\bullet}(X_{\bullet}, Y_{\bullet}; \A)] \\
}
\]
where $\pi_{\bullet}^{\A}$ is the following $2$-morphism
\[ 
\xymatrix@C=26pt{
C_{\bullet}(Y_{\bullet}; \A) \ar[r]^{i_{\bullet}} \rruppertwocell<12>^0{\omit} 
& C_{\bullet}(X_{\bullet}; \A) \ar[r]_{p^\A_\bullet } &
C_{\bullet}(X_{\bullet}, Y_{\bullet}; \A) \lltwocell<\omit>{<3>\pi_{\bullet}^{\A} \ \ }. \\
}
\]

The universality of the cokernel determines a unique pair consisting of a $1$-morphism in $\2ch$
\[
u:C_{\bullet}(X_{\bullet}, Y_{\bullet};\Z[0]) \to [C^{\bullet}(X_{\bullet}; \A),
C_{\bullet}(X_{\bullet}, Y_{\bullet}; \A)],
\]
and a $2$-morphism in $\2ch$, $\lambda_\bullet: u \circ p_\bullet \Rightarrow \cap^{ch}_{rel}$
such that the following diagram commutes
\[ 
\xymatrix@C=20pt{
u \circ p_\bullet \circ i_\bullet \ar@{=>}[r]^{\ \ \ \ \ \ u \cdot \pi_\bullet}
\ar@{=>}[d]_{\lambda_\bullet \cdot i_\bullet} & u \circ 0 \ar@{=>}[d] \\
\cap^{ch}_{rel} \circ i_\bullet \ar@{=>}[r]_{\phi_\bullet} &0
}
\]
For more details on the universality of this cokernel we refer the
interested reader to \cite{kas-vit}.  This unique $1$-morphism, $u$,
induces the following $1$-morphism on passing to homology
\begin{equation}
 H_{\bullet}(X_\bullet, Y_\bullet, \Z[0]) \to H_{\bullet}( [C^\bullet (X_\bullet, \A),
C_\bullet (X_\bullet, Y_\bullet, \A)]).
\end{equation}
Composition with the following chain map
\begin{equation}
\label{Relative-homology-to-homology-of-internal-hom}
 H_{\bullet}( [C^\bullet (X_\bullet, \A), C_\bullet (X_\bullet, Y_\bullet, \A)]) \to
 [H^\bullet (X_\bullet, \A), H_\bullet (X_\bullet, Y_\bullet, \A)].
\end{equation}
and the adjointness of the tensor product gives us the desired chain map
\begin{equation}
\label{relative-cap-product}
\cap :  H_\bullet (X_\bullet, Y_\bullet, \Z[0])  \otimes H^\bullet(X_\bullet, \A)
\rightarrow H_\bullet (X_\bullet, Y_\bullet, \A).
\end{equation}
which we will call the relative cap product.

As in the classical case, the boundary map in homology is natural with
respect to relative cap product, in a sense made precise below.
\begin{prop}
\label{cap-del}
  The following diagram of Picard groupoids is commutative up to
  natural isomorphism for $p-1 \ge q \ge 0$:
\begin{equation*}
  \xymatrix{
    H_{p} (X_\bullet, Y_\bullet; \Z[0]) \otimes H^q(X_\bullet, \A)
    \ar[r]^-{\cap} \ar[d]_{\del \otimes i^*}& H_{p-q} (X_\bullet, Y_\bullet; \L)
    \ar[d]^{\partial} \ar@{=>}(30,-5)*{};(20,-9)*{}
    \\
    H_{p-1} (Y_\bullet; \Z[0]) \otimes H^q(Y_\bullet, \A)     \ar[r]^-{\cap}
    & H_{p-q-1} (Y_\bullet; \L)
  }.
\end{equation*}
\end{prop}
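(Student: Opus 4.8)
The statement is the categorified form of the classical boundary formula $\del(c \cap \beta) = \pm\,(\del c)\cap \beta$, valid for a relative chain $c$ and an \emph{absolute} cocycle $\beta$, the complementary term $c \cap \delta\beta$ dropping out because $\beta$ is closed. The plan is to realize the required natural isomorphism at the level of the chain maps defining the two composites, and then to argue that it descends to the (co)homology Picard groupoids. So first I would reduce to a statement about $\cap^{ch}$, the relative cap-product chain map, and the chain-level connecting homomorphism $\del$ produced by the long $2$-exact sequence.

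Next I would fix representatives, following the explicit descriptions given above. An object of $H_p(X_\bullet, Y_\bullet; \L)$ is a pair $(c_p, [b_{p-1}, t_{p-1}])$, where $c_p \in \Ob\,C_p(X_\bullet; \L)$ and $[b_{p-1}, t_{p-1}]\colon d c_p \to 0$ is a morphism in $C_{p-1}(X_\bullet, Y_\bullet; \L)$ consisting of $b_{p-1} \in \Ob\,C_{p-1}(Y_\bullet; \L)$ together with a morphism $t_{p-1}\colon d c_p \to b_{p-1}$ in $C_{p-1}(X_\bullet; \L)$. By the Example computing $\del_n$ and $\Psi_n$, we have $\del(c_p, [b_{p-1}, t_{p-1}]) = (b_{p-1}, t_{p-1})$ in $H_{p-1}(Y_\bullet; \L)$. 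An object of $H^q(X_\bullet; \A)$ is a cocycle $(\beta, \psi)$ with $\psi\colon d\beta \to 0$, and $i^*(\beta,\psi) = (\beta|_{Y_\bullet}, \psi|_{Y_\bullet})$.

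Then I would compute the two composites using the explicit front-face/back-face formula defining $\cap^{ch}$. Along the top-then-right route, capping $(c_p, [b_{p-1}, t_{p-1}])$ with $(\beta, \psi)$ produces a relative $(p-q)$-cycle whose connecting-map image is, by the description of $\del$, the $Y_\bullet$-component of its boundary; along the left-then-bottom route one caps the boundary datum $b_{p-1}$ against the restriction $\beta|_{Y_\bullet}$ inside $C_{\bullet}(Y_\bullet; \L)$. Because the boundary operator distributes over the front and back faces entering the cap-product formula, the two underlying objects agree, and the comparison of their accompanying morphism data is exactly the categorified Leibniz identity. The chain-level discrepancy between $\del(c_p \cap \beta)$ and $(\del c_p)\cap \beta$ is the ``$c_p \cap d\beta$'' term, and feeding the cocycle morphism $\psi\colon d\beta \to 0$ into the cap product converts this term into an isomorphism. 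I would assemble the filling $2$-morphism from $\psi$ together with the coherence data $\alpha$, $l$, $r$, $m$, $n$ of $\L$ and the structure $2$-cells $\chi$, and check its naturality in both variables.

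The main obstacle is to show that this chain-level natural isomorphism descends to the (co)homology Picard groupoids, i.e.\ that it is compatible with the equivalence relations defining morphisms in the relevant cokernels $\Coker(d',\chi')$ and in the relative complexes $C_\bullet(X_\bullet, Y_\bullet;\L)$. Concretely, the hardest step is verifying, for the transformation we build, the coherence condition for a $2$-morphism in $\2ch$ — the analogue of $(\gamma_{n+1}d_\A)\circ\phi_n = \phi'_n \circ(d_\B\gamma_n)$ — while keeping track of the ``signs'' that in this setting are carried by the inverse isomorphisms $m_s, n_s$ rather than by literal $\pm 1$. Independence of the auxiliary choices made in constructing $\del$ (the lift $B_n$, and here $b_{p-1}, t_{p-1}$) must also be checked; this is where the faithfulness of $f^{n+2}$ used in that construction, and the universality of the cokernel invoked in the definition of the relative cap product, do the work of pinning the isomorphism down uniquely and hence making the square commute up to a canonical natural isomorphism.
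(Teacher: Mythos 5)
The paper states Proposition~\ref{cap-del} without any proof: the proposition is immediately followed by Section~\ref{gerbes}, so there is no argument of the authors' to compare yours against, and your proposal must be judged on its own terms.

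On its own terms, your plan is the natural categorification of the classical identity $\del(c \cap \beta) = \pm(\del c)\cap\beta \pm c\cap\delta\beta$, and its key idea is the right one: the term $c \cap \delta\beta$, which classically vanishes because $\beta$ is a cocycle, here survives as an object that is canonically isomorphic to $0$ via the cocycle datum $\psi\colon d\beta \to 0$, and capping with $\psi$ is precisely what should supply the filling $2$-morphism of the square (you also correctly read the coefficients in the right-hand column as $\A$, where the paper's diagram has a typo writing $\L$). Your use of the paper's explicit data — relative homology objects $(c_p,[b_{p-1},t_{p-1}])$, the Example computing $\del$ and $\Psi$ on such objects, and the front-face/back-face formula for $\cap^{ch}$ — is consistent with how the theory is set up. Two caveats keep this a plan rather than a proof. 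First, the homology-level cap product in the paper is not defined on representatives: it is obtained from the universal $1$-morphism $u$ out of the cokernel, the morphism \eqref{Relative-homology-to-homology-of-internal-hom}, and the tensor-hom adjunction, so ``computing the two composites on representatives'' requires first unwinding that machinery, which you flag but do not do. Second, the filling $2$-morphism itself — assembled, as you say, from $\psi$, the structure isomorphisms $\alpha$, $l$, $r$, $m$, $n$, and the cells $\chi$ — is never constructed, and in the categorified setting this is where essentially all the content lies, because the object-level Leibniz identity holds only up to exactly these isomorphisms, and their coherence and naturality is what makes the isomorphism of the proposition canonical. No conceptual step in your outline is wrong, and the deferred verifications are of the routine-but-lengthy kind; in fairness, your sketch already supplies more than the paper itself does for this statement.
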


\section{Hermitian line gerbes}
\label{gerbes}

In this section we describe geometric objects which we call
\emph{$($flat$)$ hermitian line $n$-gerbes}. Then we give an example
describing a \ChGG\ over the simplicial set $BG$, where $G$ is a
(discrete) group. We move on to describe certain geometic objects over
a topological space $X$ which are classified by the \Ch\ cohomology of
$X$ with coefficients in $U(1)$ and which we call \emph{\ChG s} over
$X$. We describe these in two ways. For the first description, we
define a category $\cu$, associated to an open cover of $X$ and show
that hermitian line $0$-co\-cy\-cles on the simplicial set $N(\cu)$
represent (flat) \ChZG s.  Our second description is that a flat
\ChZG\ can be represented by a functor from the first fundamental
groupoid of $X$ into the Picard groupoid of hermitian lines
$\L$. Finally, we move on to describe higher hermitian line gerbes
over $X$.

\begin{df}
A \emph{hermitian line $n$-gerbe on} a simplicial set $X_\bullet$ is
an an $n$-cocycle on the simplicial set $X_\bullet$ with values in $\L$
i.e. an object $K \in \Ob H^n (X_\bullet, \L)$ of degree $n$ cohomology
Picard groupoid of $X_\bullet$ with coefficients in the Picard groupoid
$\L$ of hermitian lines.
\end{df}

\begin{rem}
 A $n$-gerbe should be properly defined as a $0$-cocycle with
 coefficients in an appropriate Picard $(n+1)$-groupoid
 but this would be out of scope of this paper.
\end{rem}

The following example shows that a $2$-gerbe on $BG$, where
$G$ is a finite group, is exactly the same as a $2$-cocycle
with values in hermitian lines as defined in \cite{fhlt}.

\begin{ex}
 Any group $G$ can be viewed as a category with a single object.
 The simplicial set $BG$ is the nerve of this category.
 An object of $H^2(BG;\L)$ consists of a pair $(\beta, \phi:d\beta \to 0)$,
 where $\beta \in \Ob C^2(BG;\L)$ and $\phi$ is an arrow in $C^3(BG;\L)$
 and $d$ is the differential of the $2$-complex $C^\bullet(BG;\L)$.
 $\beta$ is a set function whose domain is the underlying set
 of $G \times G$ and codomain is $\Ob \L$ i.e. it assigns to
 each pair $(g_1, g_2) \in G \times G$ a hermitian line
 $l_{g_2,g_1} \in \Ob \L$. The arrow $\phi$ gives, for
 every triple $(g_1, g_2, g_3) \in G \times G \times G$,
 a following isomorphism in $\L$
 \[
  t_{g_3, g_2, g_1}:l_{g_3, g_2} - l_{g_3,g_2g_1}
  +l_{g_3g_2, g_1} - l_{g_2, g_1} \to \mathbb{C}.
 \]
 The morphism $d(\phi):d^2(\beta) \to 0$
 gives, for every quadruple $(g_1, g_2, g_3, g_4) \in G \times G
 \times G \times G$, the following isomorphism
 \[
 t_{g_4,g_3,g_2} - t_{g_4,g_3,g_{21}} + t_{g_4,g_{32},g_1}
 -t_{g_{43,g_2,g_1}} + t_{g_3,g_2,g_1},
 \]
 which is the canonical isomorphism $\chi_\beta((g_1, g_2, g_3, g_4)):
 d^2\beta((g_1, g_2, g_3, g_4)) \to \mathbb{C}$.
 
\end{ex}

By the definition above, a flat \ChZG\ on a simplicial set $X_\bullet$
is just an object of the Picard groupoid $H^0(X_\bullet; \L)$. Thus,
given a topological space $X$, we may look at \ChZG s over $X$ in two
ways: associating simplicial sets $\Sing_\bullet X$ and $N(\cu)$ to
$X$, where $N(\cu)$ is the simplicial sets obtained by taking the
nerve of a category associated to the cover of $X$, $\cu$, which we
now define:
\begin{df}
  \label{category-associated-to-cover}
  We define $\cu$ to be a category whose object set is
 the collection $\mathfrak{U}_I = \lbrace U_i: i \in I \rbrace\ $,
 which is a chosen open cover of the topological space $X$. If the
 set $U_{i,j} \neq \emptyset$, then $\Hom_{\cu}(U_i, U_j)
 = \lbrace U_{i,j} \rbrace$, otherwise the set
 $\Hom_{\cu}(U_i, U_j) = \emptyset$. Composition in $\cu$
 is defined as follows: $U_{i,j} \circ U_{j,k} := U_{i,j,k}
 := U_{i,j} \cap U_{j,k}$. $id_{U_{i}} := U_{ii}$. The source
 of an arrow $U_{i,j}$ is $U_i$ and its target is $U_j$. 
 \end{df}
 This leads to two interpretations of flat \ChZG s: Definitions
 \ref{0-gerbe} and \ref{c-0-gerbe} below.
 \begin{df}
\label{0-gerbe}
A \emph{flat \ChZG} over $X$, $\mathcal{G}^0(\Lambda, \theta)$, is
defined by the following data
 \begin{enumerate}
  \item [1.] A function $\Lambda_0:(\Sing_\bullet X)_0 \to \Ob(\L)$,
  i.e. an assignment of a hermitian line to each point of $X$.
\item [2.] A function $\Lambda_1:(\Sing_\bullet X)_1 \to \Mor(\L)$
  which assigns to each $f \in (\Sing_\bullet X)_1$, a linear isometry
  $\Lambda_1(f):\Lambda_0\partial_1(f) \to \Lambda_0\partial_0(f)$ in
  $\L$ such that for all $f, g \in (\Sing_\bullet X)_1$ satisfying
  $\partial_1(f) = \partial_0(g)$, $\Lambda_1(g \circ f) = \Lambda(g)
  \circ \Lambda(f)$.
 \end{enumerate}
 This data is subject to the following condition. For each
 $n \ge 2$, there exists a function $\Lambda_n:(Sing_\bullet X)_n \to \Mor(\L)$
 such that for all $\sigma_n \in (Sing_\bullet X)_n$,
 $\Lambda_n(\sigma_n) = \Lambda_{n-1}(\partial_0\sigma_n) \circ 
 \Lambda_{n-1}(\partial_1\sigma_n) \circ \cdots \circ \Lambda_{n-1}
 (\partial_{n-1}\sigma_n)$.
 \end{df}
 \begin{rem}
  The above definition assigns a hermitian line to each point of $X$.
  Further, two homotopic paths in $X$, relative to endpoints,
  are assigned the same linear isometry. In other words the above data
  is equivalent to defining a functor from the first fundamental groupoid
  of the space $X$, $\Pi_1(X)$, to $\L$.
 \end{rem}
 \begin{df}
\label{c-0-gerbe}
A \emph{flat \ChZG} over $X$, $\mathcal{G}^0(\Lambda, \theta)$, is
defined by the following data
 \begin{enumerate}
  \item [1.] A constant hermitian line bundle $\Lambda_i$
  over every open set $U_{i}$ for all $i \in I$.
  
  \item [2.] For each ordered pair of distinct indices
  $(i, j) \in I \times I $, a constant, non-zero section
  \[
   \theta_{i,j} \in \Gamma(U_{i,j}; \Lambda_i \otimes
   \Lambda_j)
  \]
 \end{enumerate}
 This data is subject to a cocycle condition, on $U_{i,j,k}$
 which we denote by $\delta \theta \Rightarrow 0$.
 The cocycle condition is that over any three fold
 intersections $U_{i,j,k}$, we can tensor the three
 sections of the coboundary to give a trivialization
 of the following hermitian line bundle
 \begin{equation*}
  (\Lambda_i \otimes \Lambda_j) \bigotimes
  (\Lambda_i \otimes \Lambda_k)^{-1} \bigotimes
  (\Lambda_j \otimes \Lambda_k).
 \end{equation*}
 over $U_{i,j,k}$. Notice that the above hermitian line bundle
 is canonically trivial, so the cocycle condition
 is the requirement that the following
 \[
  \theta_{i,j} - \theta_{i,k} + \theta_{j,k}
 \]
 be the canonical section of this trivial hermitian line bundle
 over $U_{i,j,k}$.

 \end{df}
\begin{rem}
 Each point $x \in X$ has a neighborhood $U_i$ such that the
 hermitian line bundle $\Lambda_i$ is isomorphic to the
 trivial hermitian line bundle $U_i \times \mathbb{C}$.
 Further, the specification of constant, non-zero section
 $\theta_{i,j}$ is the same as specifying a hermitian line
 bundle isomorphism $g_{i,j}:\Lambda_i|_{U_{i,j}} \to \Lambda_j|_{U_{i,j}}$,
 which restricts to the same linear isometry on every fiber.
 These two observations along with the data in the definition
 above are sufficient to construct a (flat) hermitian line bundle
 over the space $X$.
 \end{rem}

 Now we move on to define higher hermitian line gerbes. Our definition
 of a \ChG\ closely follows the definition of a ``1-gerb'' developed
 in \cite{DSC}.

\begin{df}
  A \emph{\ChG} over $X$, $\mathcal{G}^1(\Lambda, \theta)$, is defined
  by the following data
 \begin{enumerate}
  \item [1.] A constant hermitian line bundle $\Lambda_i^j$
  over the intersection $U_{i,j}$ for every ordered pair
  $(i, j) \in I \times I$ and $i \ne j$, such that
  $\Lambda_i^j$ and $\Lambda_j^i$ are dual to each other.
  
  \item [2.] For each ordered triple of distinct indices
  $(i, j, k) \in I \times I \times I$, a nowhere zero section
  \[
   \theta_{i,j,k} \in \Gamma(U_{i,j,k}; \Lambda_i^j \otimes
   \Lambda_j^k \otimes \Lambda_k^i)
  \]
  such that the sections of reorderings of triples
  $(i, j, k)$ are related in the natural way.
 \end{enumerate}
 This data is subject to a cocycle condition, on $U_{i,j,k,l}$
 which we denote by $\delta \theta \Rightarrow 0$.
 The cocycle condition is that over any four fold
 intersections $U_{i,j,k,l}$, we can tensor the four
 sections of the coboundary to give a trivialization
 of the following hermitian line bundle
 \begin{equation}
 \label{Delta-Lambda}
  (\Lambda_i^j \otimes \Lambda_j^k \otimes \Lambda_k^i) \bigotimes
  (\Lambda_i^j \otimes \Lambda_j^l \otimes \Lambda_l^i)^{-1} \bigotimes
  (\Lambda_i^k \otimes \Lambda_k^l \otimes \Lambda_l^i) \bigotimes
  (\Lambda_j^k \otimes \Lambda_k^l \otimes \Lambda_l^j)^{-1}.
 \end{equation}
 over $U_{i,j,k,l}$. Notice that the above hermitian line bundle
 is canonically trivial, so the cocycle condition
 is the requirement that the following
 \[
  \theta_{i,j,k} - \theta_{i,j,l} + \theta_{i, k, l}
  -\theta_{j,k,l}
 \]
 be the canonical section of this trivial hermitian line bundle
 over $U_{i,j,k,l}$.
\end{df}
 
The tensor product of two \ChG s is obtained by tensoring line bundles
and sections in an obvious way.
 
Let $(\alpha, \phi) \in \Ob H^1(N(\cu;\L)$.  To each $U_{i,j}$, the
cochain $\alpha$ assigns a hermitian line $l_i^j$ and the morphism
$\phi$ specifies a linear isometry for each $U_{i,j,k}$
 \[
  \phi(U_{i,j,k}):l_i^j - l_k^i + l_j^k \to \mathbb{C}.
 \]
  Equivalently, the specification of this linear isometry
 is the specification of a constant function
 $t_{i, j, k}:U_{i,j,k} \to U(1)$. In other words
 \[
  t_{i,j,k}(x) = \phi(U_{i,j,k}),
 \]
 $\forall x \in U_{i,j,k}$. Put constant
 hermitian line bundles $\Lambda_i^j = U_{i,j}
 \times l_i^j$ over each $U_{i,j}$. Then $t_{i,j,k}$
 gives a trivialization of the coboundary line bundle
 $\Lambda_i^j \otimes \Lambda_j^k \otimes \Lambda_k^i$.
 We define the section
 \[
  \theta_{i,j,k}(x) = t_{i,j,k}^{-1}(x, e_1).
 \]
 The morphism
  \[
 dt_{i,j,k} = d\phi(U_{i,j,k,l}) = t_{i,j,k} -t_{i,j,l}
 + t_{i,k,l} - t_{j,k,l}
 \]
 gives a trivialization of the line bundle \eqref{Delta-Lambda} over
 $U_{i,j,k,l}$.  The following section corresponds to the above
 trivialization of the the hermitian line bundle \eqref{Delta-Lambda}
 \[
  \theta_{i,j,k} - \theta_{i,j,l} + \theta_{i,k,l}
  -\theta_{j,k,l}.
 \]
 Clearly this is the canonical section.
%

\begin{sloppypar}
  Conversely, a \ChG\ over $X$ defines a $1$-cocycle in $H^1(N(\cu;
  \L))$.  We leave the easy verification of this fact as an excercise
  for the reader.
\end{sloppypar}
 
 \begin{df}
  \label{equivalence-of-locally-trivialized-gerbes}
  Let $\mathcal{G}^1(\Lambda, \theta)$ and $\mathcal{H}^1(\Upsilon,
  \eta)$ be two \ChG s over $X$ and let $(g, \phi)$ and $(h, \psi)$
  the two $1$-cocycles in $H^1(N(\cu;\L))$ determined by them, then
  the the two gerbes $\mathcal{G}^1(\Lambda, \theta)$ and
  $\mathcal{H}^1(\Upsilon, \eta)$ are equivalent if there exists a
  morphism $(g, \phi) \to (h, \psi)$ in $H^1(N(\cu);\L)$.
  
 \end{df}
 If $\mathcal{G}^1(\Lambda, \theta)$ and $\mathcal{H}^1
 (\Upsilon, \eta)$ are equivalent, then there are
 hermitian line bundle isomorphisms
  \[
   \Lambda_i^j \cong \Upsilon_i^j,   
  \]
  over each $U_{i,j}$, such that the isomorphisms induce
  a mapping
  \[
   \theta_{i,j,k} \mapsto \eta_{i,j,k}.
  \]
 
 \begin{df}
 \label{global-trivialization-of-cech-1-gerbe}
 A \ChG\ $\mathcal{G}^1(\Lambda, \theta)$ is \emph{globally
   trivialized} by displaying a basis $\lambda_i^j$ for each line
 bundle $\Lambda_i^j$ such that on each $U_{i,j,k}$, we can express
 the sections on three fold intersections, in terms of coordinates
 specifed specified by the data and the ring
 $C^\infty(U_{i,j,k};U(1))$, as follows:
  \[
   \theta_{i,j,k} = 1(x) \lambda_i^j \otimes \lambda_j^k
  \otimes \lambda_k^i,
  \]
  where $1(x) \in C^\infty(U_{i,j,k};U(1))$ is the constant
  function which assigns to each point $x \in U_{i,j,k}$, the
  identity of the group $U(1)$.
 \end{df}
 \begin{rem}
   Let $\G$ be a globally trivial \ChG\ and $(\alpha, \phi)$ be the
   \ChC\ determined by $\G$.  Then for $U_{i,j,k}$
  \[
   \phi(U_{i,j,k}):l_i^j - l_j^k + l_k^i \to
   \mathbb{C}
  \]
  is the canonical isomorphism.
 \end{rem}

 \begin{df}
 \label{trivial-cech-1-gerbe}
 A \ChG\ is \emph{trivial} if it is equivalent to the \emph{zero
   $1$-gerbe over $X$}, which is the \ChG\ determined by the cocycle
 $(0,id_0) \in H^1(N(\cu);\L)$.
 \end{df}
 
 The notion of a trivial \Ch\ hermitian line 1-gerbe can equivalently
 be defined by a geometric entity called an \emph{object}, which we
 define next.
 \begin{df}
 \label{Object}
 Given a \ChG\ $\mathcal{G}^1(\Lambda, \theta)$, an \emph{object
   compatible with} $\mathcal{G}^1$, denoted $\O(L,m)$ is specified by
 the following data
 \begin{enumerate}
  \item [1.] Constant hermitian line bundles $L_i$ over each $U_i$;
  
  \item[2.] Hermitian line bundle isomorphisms over each intersection
  $U_{i, j}$
  \[
   m_i^j:L_i \cong \Lambda_i^j \otimes L_j;
  \]
  such that the composition on three fold intersection
  \[
   L_i \longrightarrow (\Lambda_i^j \otimes \Lambda_j^k \otimes
   \Lambda_k^i) \otimes L_i
  \]
  is exactly
  \[
   (id \otimes m_k^i) \circ (id \otimes m_j^k)
   \circ m_i^j \equiv \theta_{i,j,k} \otimes id.
  \]
 Here we are abusing notation by denoting the trivialization
 determined by the section $\theta_{i,j,k}$ also by
 $\theta_{i, j, k}$.
 \end{enumerate}
 \end{df}

\begin{prop}
\begin{sloppypar}
  Let $\mathcal{G}^1$ be a \ChG\ over $X$ and let $(\alpha, \phi)$ be
  the \Ch\ hermitian line $1$-cocycle determined by $\G^1$. Then
  $\G^1$ has an object, $\O(L,m)$, compatible with it iff there is a
  \Ch\ hermitian line $0$-chain $\beta \in \Ob C^0(N(\cu);\L)$ and a
  morphism $f:(\alpha, \phi) \to (d\beta, \chi_\beta)$, in $Ker(d,
  \chi)$, such that $(\beta, f)$ is a representative of a morphism
  $[(\beta, f)]: (\alpha, \phi) \to (0, id_0)$ in $H^1(N(\cu);\L)$.
\end{sloppypar}
\end{prop}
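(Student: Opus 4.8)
The plan is to set up an explicit dictionary between the data of a compatible object $\O(L,m)$ and the data of a pair $(\beta,f)$ representing a morphism $(\alpha,\phi)\to(0,\id_0)$ in $H^1(N(\cu);\L)$, and then to observe that the defining conditions on the two sides are literally the same equation. Because every bundle involved is constant, a constant hermitian line bundle $L_i$ over $U_i$ is the same datum as an object $\beta(U_i)\in\Ob\L$, so the family $(L_i)_{i\in I}$ is exactly a $0$-cochain $\beta\in\Ob C^0(N(\cu);\L)$; and a constant hermitian line bundle isomorphism over $U_{i,j}$ is the same as a single morphism of $\L$. Using the face-map formula $d\beta(U_{i,j})=\beta(U_j)-\beta(U_i)$ together with the rigid structure of $\L$ and the duality $\Lambda_i^j=(\Lambda_j^i)^{-1}$, the isomorphism $m_i^j:L_i\cong\Lambda_i^j\otimes L_j$ corresponds to a morphism $f_{i,j}:\alpha(U_{i,j})\to d\beta(U_{i,j})$, so the family $(m_i^j)$ is exactly a morphism $f:\alpha\to d\beta$ in $C^1(N(\cu);\L)$. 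The identifications $\alpha\leftrightarrow\Lambda$ and $\phi\leftrightarrow\theta$ are those already used above in passing between $1$-cocycles and \ChG s.

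For the forward implication, given $\O(L,m)$ I would read off $\beta$ and $f$ as above and check that $f$ defines a morphism $(\alpha,\phi)\to(d\beta,\chi_\beta)$ in $\Ker(d,\chi)$, i.e.\ that $\chi_\beta\circ d(f)=\phi$. Over a triple $U_{i,j,k}$ the morphism $d(f)$ is the alternating sum $f_{j,k}-f_{i,k}+f_{i,j}:d\alpha(U_{i,j,k})\to d^2\beta(U_{i,j,k})$, while $\chi_\beta$ is the canonical $2$-cell witnessing $d^2\beta\cong 0$ assembled from the associativity and inverse isomorphisms $\alpha,m,n$ of $\L$ (Section~\ref{coh}). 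Composing the former with the latter is precisely the rearrangement that assembles the $f$'s into the composite $(\id\otimes m_k^i)\circ(\id\otimes m_j^k)\circ m_i^j$, so the equality $\chi_\beta\circ d(f)=\phi$ is exactly the object condition $(\id\otimes m_k^i)\circ(\id\otimes m_j^k)\circ m_i^j\equiv\theta_{i,j,k}\otimes\id$, using that $\phi(U_{i,j,k})$ is by construction the trivialization $\theta_{i,j,k}$. Hence $(\beta,f)$ represents a morphism $(\alpha,\phi)\to(0,\id_0)$ in $H^1(N(\cu);\L)$.

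For the converse I would run the dictionary backwards: from $\beta$ set $L_i$ to be the constant bundle with fiber $\beta(U_i)$, and from $f$ define $m_i^j$ to be the constant isomorphism it determines. The hypothesis $\chi_\beta\circ d(f)=\phi$, read over each $U_{i,j,k}$ through the same dictionary, is precisely the triple-intersection identity for $m$, so $\O(L,m)$ is an object compatible with $\G^1$. The equivalence-class nature of the morphism $[(\beta,f)]$ in $H^1$ is irrelevant to the statement, which asserts only existence: a morphism $(\alpha,\phi)\to(0,\id_0)$ exists precisely when some representing pair $(\beta,f)$ does.

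The step I expect to be the genuine obstacle is pinning down the sign, duality, and associativity conventions so that the two conditions coincide on the nose rather than merely up to an unexamined canonical isomorphism. Concretely, I must verify that the $2$-cell $\chi$ built from the coherence data of $\L$ agrees with the rearrangement implicit in the composite of the $m_i^j$, and that the ordering of the faces $d_0,d_1,d_2$ of the $2$-simplex $U_{i,j,k}$ in the nerve $N(\cu)$ produces the alternating sum with exactly the signs $l_i^j+l_j^k+l_k^i$ appearing in the gerbe and object definitions. Once this bookkeeping is fixed, the two sides of the equivalence are term by term the same data constrained by the same equation, and the Proposition follows.
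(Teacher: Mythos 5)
Your proposal is correct and follows essentially the same route as the paper's own proof: both set up the dictionary $L_i \leftrightarrow \beta(U_i)$, $m_i^j \leftrightarrow f(U_{i,j})$ between constant line bundles with transition isomorphisms and a $0$-cochain with a morphism $f:(\alpha,\phi)\to(d\beta,\chi_\beta)$, and both identify the triple-intersection compatibility condition on $\O(L,m)$ with the single equation $\chi_\beta\circ d(f)=\phi$. The paper's proof is just a terser version of yours (it runs the cocycle-to-object direction first and leaves the converse to the reader), while you additionally flag the sign and coherence bookkeeping that the paper passes over in silence.
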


\begin{proof}
  Let $\G^1$ be a trivial \ChG\ over $X$ as above. Then there exists a
  morphism $[(\beta, f)]: (\alpha, \phi) \to (0, id_0)$ in
  $H^1(N(\cu);\L)$.  Choose a representative $(\beta, f)$ of this
  morphism.  Now we define the constant line bundle, $L_i$, over each
  $U_i$ as follows: $L_i := U_i \times \beta(U_i)$. The linear
  isometry $f(U_{i,j}):\alpha(U_{i,j}) \to (\beta(U_i) - \beta(U_j))$
  determines a morphism of hermitian line bundles
 \[
  m_i^j:L_i \to \Lambda_i^j \otimes L_j
 \]
 over each $U_{i,j}$. The condition over three fold intersections, in
 definition \ref{Object}, follows from the equation $\chi_\beta \circ
 d(f) = \phi$.  Conversely, given an object compatible with a trivial
 \ChG\ $\G^1$, one can define the isomorphism $[(\beta, f)]:(\alpha,
 \phi) \to (0, id_0)$ in $H^1(N(\cu);\L)$.
\end{proof}
 
Finally, we are ready to define a \emph{\ChGG} over $X$.
 \begin{df}
   A \emph{\ChGG\ over $X$}, $\G^2(\G, \O, \theta)$, is defined by the
   following data
 \begin{enumerate}
 \item [1.] A \ChG\ $\G_i^j$ over the intersection $U_{i,j}$ for every
   ordered pair $(i, j) \in I \times I$ and $i \ne j$ such that
   $\G_i^j$ and $\G_j^i$ are dual to each other.
  
\item [2.] For each ordered triple of distinct indices $(i, j, k) \in
  I \times I \times I$, an object $\O_{i,j,k}$ compatible with the
  coboundary gerbe
  \[
    \G_i^j \otimes \G_j^k \otimes \G_k^i
  \]
  such that the sections of reorderings of triples
  $(i, j, k)$ are related in the natural way.
  
  \item[3.] For each ordered quadruple of distinct indices
  $(i, j, k, l) \in I \times I \times I \times I$, 
  trivializations $\theta_{i,j,k,l}$ of coboundaries
  of objects
  \[
   \O_{i,j,k} \otimes \O_{i,j,l}^{-1} \otimes
   \O_{i,k,l} \otimes \O_{j,k,l}^{-1}
  \]
  on $U_{i,j,k,l}$. Notice that each pair $(\O_{i,j,k}
  \otimes \O_{i,j,l}^{-1})$ is a line bundle over
  $U_{i,j,k,l}$ so asking for a \emph{trivialization}
  of the object is ligitimate.
 \end{enumerate}
 This data is subject to a cocycle condition, on $U_{i,j,k,l,m}$
 which we denote by $\delta \theta \Rightarrow 0$.
 The cocycle condition is that over any five fold
 intersections $U_{i,j,k,l,m}$, we can tensor the five
 sections of the coboundary objects to give a trivialization
 of the following hermitian object
 \[
  (\O_{i,j,k} \otimes \O_{i,j,l}^{-1} \otimes \O_{i,k,l}
  \otimes \O_{j,k,l}^{-1}) \bigotimes
  (\O_{i,j,k} \otimes \O_{i,j,m}^{-1} \otimes
   \O_{i,k,m} \otimes \O_{j,k,m}^{-1})^{-1}
   \]
   \[
   \bigotimes   
  (\O_{i,j,l} \otimes \O_{i,j,m}^{-1} \otimes \O_{i,l,m}
  \otimes \O_{j,l,m}^{-1}) \bigotimes
  (\O_{i,k,l} \otimes \O_{i,k,m}^{-1} \otimes \O_{i,l,m}
  \otimes \O_{k,l,m}^{-1})^{-1}
 \]
 \[
   \bigotimes   
  (\O_{j,k,l} \otimes \O_{j,k,m}^{-1} \otimes \O_{j,l,m}
  \otimes \O_{k,l,m}^{-1})
  \]
 Notice that the above object is canonically trivial,
 so the cocycle condition is that the following
 \[
  \theta_{i,j,k,l} - \theta_{i,j,k,m} + \theta_{i,j,l,m}
  -\theta_{i,k,l,m} +\theta_{j,k,l,m} 
 \]
 is the canonical section.
\end{df}
A hermitian line $2$-cocycle $(\alpha, \phi)$ represents a \ChGG\ over
$X$. We outline a construction of a \ChGG\ starting from the
$2$-cocycle $(\alpha, \phi)$. A \ChG , $\G^i_j(\Lambda, \theta)$ over
$U_{i,j}$ for every pair $(i,j) \in I \times I$, is determined by the
$2$-cocycle $(\alpha, \phi)$ as follows: Over each three-fold
intersection, $U_{i,j,k}$, a constant hermitian line bundle
$\Lambda_{i,j,k}$ is defined by $\Lambda_{i,j,k} := U_{i,j,k} \times
\alpha(U_{i,j,k})$.  On every four-fold intersection $U_{i,j,k,l}$,
the section $\theta_{i,j,k,l}$ is determined by the linear isometry
 \[
  \phi(U_{i,j,k,l}):\alpha(U_{i,j,k}) - \alpha(U_{i,k,l})
  + \alpha(U_{i,j,l}) - \alpha(U_{j,k, l}) \to \mathbb{C}.
 \]
 This section satisfies the cocycle condition $\delta \theta
 \Rightarrow 0$ over five-fold intersections, thus defining a \ChG\
 over $U_{i,j}$. Notice that the coboundary \ChG\, $\G^i_j \otimes
 \G^j_k \otimes \G^i_k$ over $U_{i,j,k}$, is trivial, therefore there
 exists an object $\O_{i,j,k}$ compatible with this trivial coboundary
 gerbe. This object $\O_{i,j,k}$ is specified by the $2$-chain $\alpha
 \in C^2(N(\cu);\L)$. Over each four-fold intersection $U_{i,j,k,l}$,
 a section $\theta_{i,j,k,l}$ of the coboundary Object $\O_{i,j,k}
 \otimes \O_{i,j,l}^{-1} \otimes \O_{i,k,l} \otimes \O_{j,k,l}^{-1}$
 is specified by the linear isometry $\phi(U_{i,j,k,l})$.


\section{Dijkgraaf-Witten theory}

We would like to recover Dijkgraaf-Witten's construction
\cite{dw} of a TQFT. In principle, we follow their
construction, using Freed-Quinn's hermitian-line incarnation
\cite{fq}, and placing it further within the framework of
cohomology with coefficients in the Picard groupoid of hermitian
lines.

\subsection{Hermitian line corresponding to a closed $n$-manifold}
\label{object}

We start with an $n$-cocycle $\alpha$ which is an object of the Picard
groupoid $H^n (BG; \L)$. For each map $f: Y \to BG$ from a closed
$n$-manifold $Y$, we take the pullback $f^* \alpha$. Consider the cap
product
\[
\cap: H^n (Y; \L) \otimes H_n (Y; \Z[0]) \to H_0 (Y; \L),
\]
which is a morphism of Picard groupoids. If we substitute the given
cocycle $\alpha$ in the first factor, we will get a morphism
\begin{equation}
  \label{capping-with-alpha}
f^* \alpha \cap - : H_n (Y; \Z[0]) \to H_0 (Y; \L).
\end{equation}
What we would like to do is to apply this morphism to the fundamental
cycle of $Y$. However, in the homology with coefficients in a Picard
groupoid, be it a discrete one, such as $\Z[0]$, no single object
represents the fundamental cycle canonically. It is rather a full
subgroupoid (not monoidal) $C_Y$ formed by all possible cycles
representing the fundamental cycle and connected by equivalence
classes of morphisms given by $n$-boundaries modulo
$(n+1)$-boundaries: a morphism $y \to y'$ is given by an $(n+1)$-chain
$x$ such that $y' = y + dx$; two morphisms $x: y \to y'$ and $x': y
\to y'$ are equivalent if there is an $(n+2)$-chain $w$ such that $x =
dw + x'$. Thus, we can restrict the above morphism
\eqref{capping-with-alpha} to this \emph{fundamental-cycle groupoid}
$C_Y$ and get a functor
\[
f^* \alpha \cap - : C_Y \to H_0 (Y; \L).
\]
If we compose this functor with the \emph{degree map}
\[
H_0 (Y; \L) \to \L
\]
which takes each linear combination $a_1 y_1 + \dots + a_k y_k$ of
points $y_1, \dots, y_k$ in $Y$ with coefficients $a_1, \dots, a_k$ in
$\L$ to the sum $a_1 + \dots + a_k$, which is an object in $\L$, we
obtain a functor
\begin{equation}
\label{cap-with-fundamental-cycle groupoid}
F: C_Y \to \L
\end{equation}
from the fundamental-cycle groupoid to the groupoid of hermitian
lines. Now we take the limit of this functor. The existence of the
limit is guaranteed by the following fact.

\begin{prop}
\label{the_line}
The functor
\[
F: C_Y \to \L,
\]
which represents the cap product of the cocycle $\alpha$ with the
fundamental-cycle group\-oid $C_Y$, has a limit,
\[
\lim_{C_Y} F,
\]
in the category $\L$ of hermitian lines.
\end{prop}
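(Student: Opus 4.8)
The plan is to prove existence by showing that $C_Y$ is not merely connected but, because $Y$ is a manifold, is in fact equivalent to the terminal groupoid, so that $F$ admits a limit for purely formal reasons. The key observation is that a functor into $\L$ out of a connected groupoid $G$ is, up to isomorphism, the same data as a monodromy homomorphism $\rho\colon \Aut_G(y_0)\to \Aut_\L(F(y_0))=U(1)=\pi_1(\L)$ at any chosen object $y_0$; the limit in $\L$ exists precisely when $\rho$ is trivial, and in that case it is canonically $F(y_0)$. (Computed in $\mathbf{Vect}$ the limit would be the invariants $F(y_0)^{\rho}$, which is the whole line when $\rho$ is trivial and $0$ otherwise, the latter being outside $\L$.) So the entire content is to identify $\Aut_{C_Y}(y_0)$ and to see that the monodromy is forced to vanish. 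First I would record that $C_Y$ is connected: any two objects $y,y'$ are $n$-cycles in the class $[Y]\in H_n(Y;\Z)$, so $y'-y$ is an $n$-boundary, whence an $(n+1)$-chain $x$ with $dx=y'-y$ furnishes a morphism $x\colon y\to y'$.

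Next I would compute the vertex group. An endomorphism of $y_0$ in $C_Y$ is an $(n+1)$-chain $x$ with $y_0+dx=y_0$, i.e.\ $dx=0$, taken modulo the relation $x\sim x+dw$ for $(n+2)$-chains $w$; therefore
\[
\Aut_{C_Y}(y_0)=\ker\bigl(d\colon C_{n+1}\to C_n\bigr)\big/\operatorname{im}\bigl(d\colon C_{n+2}\to C_{n+1}\bigr)=H_{n+1}(Y;\Z).
\]
Here I invoke the hypothesis that $Y$ is a closed $n$-manifold: the homology of an $n$-manifold vanishes above degree $n$, so $H_{n+1}(Y;\Z)=0$ (for a triangulated $Y$ with simplicial chains this is immediate, as there are no chains above dimension $n$, and for singular chains it is the standard vanishing theorem). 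Hence every automorphism group of $C_Y$ is trivial, and a connected groupoid with trivial vertex groups has a \emph{unique} morphism between any two objects (given $f,g\colon y\to y'$, the composite $g^{-1}f\in\Aut_{C_Y}(y)$ is the identity, so $f=g$). Thus $C_Y$ is equivalent to $\mathbf{1}$. The unique isometries $F(y_0)\to F(y)$ then assemble into a cone that exhibits $F(y_0)$ as $\lim_{C_Y}F$: any cone into $F$ is determined by its component at $y_0$ through the unique-morphism naturality, giving the required factorization. Since $F(y_0)$ is a hermitian line, this limit lives in $\L$, proving the proposition.

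The hard part is not the formal groupoid-theory but pinpointing where the manifold hypothesis enters, and I would present the vanishing $H_{n+1}(Y;\Z)=0$ as the conceptual crux. This is exactly the step that rules out the a priori possible $U(1)$-valued monodromy $\rho\colon \Aut_{C_Y}(y_0)\to\pi_1(\L)$ coming from the chain-level cap product of $f^*\alpha$ with representatives $x$ of classes in $H_{n+1}$; for a general space such monodromy could obstruct the limit, and it is only the dimension constraint $\dim Y=n$ that forces the source group, and hence the obstruction, to be zero. Everything else—well-definedness of $F$ as a functor (which uses that $\alpha$, and so $f^*\alpha$, is a cocycle, making $f^*\alpha\cap-$ a chain map) and the universal property of the cone—is routine and can be cited from the functoriality discussion and the cap-product construction established earlier.
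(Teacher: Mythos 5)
Your proof is correct and takes essentially the same route as the paper: both arguments hinge on the vanishing $H_{n+1}(Y;\Z)=0$ for $\dim Y=n$, which forces every automorphism in $C_Y$ (an $(n+1)$-cycle modulo $(n+2)$-boundaries) to be the identity, so the functor $F$ has trivial holonomy/monodromy. The only difference is packaging: the paper realizes the limit via Freed--Quinn's invariant-section construction and invokes the ``no holonomy'' criterion, whereas you make connectivity and the vertex-group computation $\Aut_{C_Y}(y_0)\cong H_{n+1}(Y;\Z)$ explicit and conclude that $C_Y$ is equivalent to the terminal groupoid, so the limit is $F(y_0)$ for formal reasons.
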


\begin{proof}
  The limit of the functor $F$ may be realized by Freed-Quinn's
  \emph{invariant-section construction}: an invariant section is a
  collection of elements in $\{s(y) \in F(y) \; | \; y \in \Ob C_Y\}$
  such that for each morphism $x: y \to y'$ in $C_Y$, we have
  $F(x)s(y) = s(y')$. The space of invariant sections is a hermitian
  line, in other words, the limit of $F$ exists, if the functor has
  \emph{no holonomy}, \emph{i.e}., $F(x) = \id$ for each automorphism
  $x: y \to y$. This is indeed the case, due to the following
  argument.

  Being an object of $H^n (Y; \L)$, the cocycle $\alpha$ is
  represented by a pair $(a,\phi)$, where $a$ is an object of $C^n (Y;
  \L)$, \emph{i.e}., a function $a: S^n(Y) \to \Ob \L$, and $\phi: da
  \to 0$ is a morphism in $C^{n+1} (Y; \L)$, \emph{i.e}., a function
  $S^{n+1} (Y) \to \Mor \L$. The functor $F: C_Y \to \L$ acts in the
  following way on objects and morphisms of the groupoid $C_Y$:
\[
  F(y)  =  a(y) \qquad \text{for $y \in \Ob C_Y$},
\]
and
\[
  F(x): a(y) \to a(y') \qquad \text{for $x \in \Mor C_Y$, $y' = y + dx$},
\]
is defined by $\phi(x): a(y') - a(y) = a (dx) = da(x) \to 0$ as a
composition of it with the structure natural transformations
\eqref{structure1}-\eqref{structure2} and their inverses.

Now suppose we have an automorphism $x: y \to y$, which in particular
means that we have a chain $x \in \Ob C_{n+1} (Y; \Z[0])$, such that
$dx = 0$. Since $H_{n+1} (Y; \Z[0])$ is trivial whenever $\dim Y = n$,
the cycle $x$ must be a boundary: $x = dw$ for some $w$. This renders
the equivalence class of the morphism $x$ to be trivial.
\end{proof}

\subsection{Linear isometry corresponding to an $(n+1)$-cobordism}
\label{morphism}

Now let $X$ be a compact $n+1$-manifold with boundary $i: \partial X
= \partial X_- \coprod \partial X_+ \subset X$. As a starting point,
we use the same $n$-cocycle $\alpha$, which is an object of the Picard
groupoid $H^n (BG; \L)$.  For any continuous function $f: X \to BG$, a
pullback of $\alpha$ along $f$ gives an $n$-cocycle $f^* \alpha$,
which is an object of the Picard groupoid $H^n (X; \L)$. Consider the
relative cap product
\[
\cap: H^n (X; \L) \otimes H_{n+1} (X, \partial X; \Z[0]) \to H_1
(X, \partial X; \L),
\]
which is a morphism of Picard groupoids. If we substitute $f^* \alpha$
in the first factor, we will get a functor
\begin{equation}
  \label{relative-capping-with-alpha}
f^* \alpha \cap - : H_{n+1} (X, \partial X; \Z[0]) \to H_1 (X, \partial X; \L).
\end{equation}
As above, we restrict this functor to the \emph{relative
  fundamental-cycle groupoid} $C_{X, \partial X}$ which is the full
subgroupoid of $H_{n+1} (X, \partial X; \Z[0])$ whose objects are all
possible relative cycles representing the relative fundamental class
of $X$. The restriction gives us a functor
\[
f^* \alpha \cap - : C_{X, \partial X} \to H_1 (X, \partial X; \L).
\]
We compose this functor first with the $2$-morphism $\Psi_1$ from (the
chain version of) the long $2$-exact sequence
\eqref{long-2-exact-sequence} and then the degree map
\begin{equation}
\label{F-map}
  \xymatrix{
  C_{X, \partial X} \ar[r]^{f^* \alpha \cap - \ \ \ \ } & H_1 (X, \partial X; \L)
   \ar[r]^{\ \ \partial_1} \rruppertwocell<10>^0{\omit}
    & H_0 (\partial X; \L) \ar[r]^{H_0(i) \ \ }  &
    H_0 (X; \L) \lltwocell<\omit>{<3>\Psi_1 \ \ } 
    \overset{deg} \to \L
  }.
\end{equation}
This diagram gives us a $2$-morphism $t: F \Rightarrow 0$, where
$F: C_{X, \partial X} \to \L$ is the composite functor in the lower
row. Consider the following diagram:
\begin{equation}
\label{big_triangle}
  \xymatrix{
H_{n+1}
    (X, \del X; \Z[0]) \ar[r]^{\ \ \ f^* \alpha \cap - } \ar[d]^{\del}& H_1 (X, \del X; \L)
    \ar[r]^{\ \ \partial_1} \rruppertwocell<10>^0{\omit} 
    & H_0 (\partial X; \L) \ar[r]^{H_0(i) \ \ }  &
    H_0 (X; \L) \lltwocell<\omit>{<3>\Psi_1 \ \ } 
    \overset{\deg} \to \L\\
H_{n}   (\del X; \Z[0]) \ar[urr]_{f|_{\del X}^* \alpha \cap -}     \ar@{=>}(9,-9)*{};(21,-4)*{}
  },
\end{equation}
where the bottom 2-morphism is comes from
Proposition~\ref{cap-del}. When we restrict the boundary 1-morphism
$\del: H_{n+1} (X, \del X; \Z[0]) \to H_{n} (\del X; \Z[0])$ to the
full subcategory $C_{X, \partial X}$, we get the following commutative
diagram:
\begin{equation*}
  \xymatrix{
    C_{X, \partial X} \ar[r] \ar[d]_{\del} \ar@{}[dr]|{\circlearrowright}& H_{n+1}
    (X, \del X; \Z[0]) \ar[d]^{\del}\\
    -C_{\partial_- X} \times C_{\partial_+ X} \ar[r]  & H_{n}   (\del X; \Z[0]) 
  },
\end{equation*}
where $-C_{\partial_- X}$ is the negative fundamental-cycle groupoid
of $\del_- X$, the full subcategory in $H_{n} (\del X; \Z[0])$ made up
by representatives of the negative fundamental class of $\del_- X$ in
$H_{n} (\del_- X; \Z) \subset H_{n} (\del X; \Z)$. By stacking
together the last two diagrams, we obtain the following diagram:
\begin{equation}
\label{little_triangle}
\xymatrix{
  C_{X, \partial X} \ar[r]^(.60){F} \ar[d]^{\del} \ruppertwocell<10>^0{\omit} 
  & \L \ltwocell<\omit>{<2.5>*!/^-2pt/{\labelstyle \Psi_F}}\\
  -C_{\partial_- X} \times C_{\partial_+ X} \ar[ur]_(.58)*!/^-1pt/{\labelstyle F_- + F_+}     \ar@{=>}(5,-7)*{};(9,-3)*{}_(0.7)*!/^3pt/{\labelstyle X_F}
},
\end{equation}
where $F_- := f|_{\del_- X}^* \alpha \cap -$ and $F_+ := f|_{\del_+
  X}^* \alpha \cap -$ appended by $H_0 (i)$ and $\deg$ as in
\eqref{big_triangle}.

Applying the limit functor, we get canonical morphisms
\[
-\lim_{C_{\del_- X}} F_- + \lim_{C_{\del_+ X}} F_+ \rightarrow
\lim_{-C_{\del_- X} \times C_{\del_+ X}} (F_-+F_+) \rightarrow
\lim_{C_{X,\del X}} F \to 0
\]
in $\L$, whence a morphism
\[
l_f: \lim_{C_{\del_- X}} F_- \rightarrow \lim_{C_{\del_+ X}} F_+,
\]
which translates into a canonical linear isometry between hermitian
lines.

\subsection{The Dijkgraaf-Witten theory TQFT functor}

Given a finite group $G$, for each $\alpha \in H^n(BG; \L)$,
we construct the Dijkgraaf-Witten theory TQFT functor,
\[
 Z^\alpha: \mathbf{Cob}(n+1) \to \Vect,
\]
from the category $\mathbf{Cob}(n+1)$ of cobordisms to the category
$\Vect$ of complex vector spaces, using the ingredients developed in
the preceding sections. We first construct the values of the functor
on objects. Observe that for every $Y \in \Ob \ \mathbf{Cob}(n+1)$,
Proposition~\ref{the_line} delivers a canonical hermitian line for
each $f \in \Map (Y, BG)$. We claim that these lines glue into a flat
hermitian line bundle over $\Map(Y, BG)$, or a \emph{local system}
with values in $\L$, \emph{i.e}., a functor
\[
\L_Y: \Pi_1\Map(Y, BG) \to \L
\]
from the fundamental groupoid of the mapping space $\Map(Y, BG)$ to
$\L$.

A morphism in $\Pi_1 \Map(Y, BG)$ is a homotopy class $[f]$ of a map
$f: Y \times I \to BG$. We can think of $Y \times I$ as the identity
cobordism between two copies of $Y$. Applying the construction of
Section \ref{morphism}, we get a morphism in $\L$,
\[
l_f: \lim_{C_Y} F_0 \rightarrow \lim_{C_Y} F_1.
\]
We define $\L_Y([f]) := l_f$.  The cocycle $f^* \alpha$ does depend on
the representative $f$ of the homotopy class $[f]$, see
Section~\ref{functoriality}, however the difference disappears at the
homology level after applying the cap product with $f^* \alpha$ and
the ``boundary homomorphism'' $\del_1: H_1 (Y \times I, \del (Y \times
I); \L) \to H_0 (\del (Y \times I); \L)$ in \eqref{big_triangle}. Note
that $f|_{\del (Y \times I)}^* \alpha$ does not depend on the
representative of the homotopy class $[f]$, because the homotopy is
supposed to be relative to the boundary. Thus, the diagram
\eqref{little_triangle} does not depend of the choice of a
representative of the homotopy class $[f]$, and the local system
$\L_Y$ is well defined.

One can view the construction of a local system $\L_y$ as
''integration of $\ev^* \alpha$ along fibers'' of $\pi$ or a
construction of the push-pull in cohomology with values in Picard
groupoids along the following diagram:
\[
\begin{CD}
Y \times \Map (Y,BG) @>\ev>> BG\\
@V{\pi}VV\\
\Map (Y, BG),
\end{CD}
\]
\[
H^n(BG; \L) \xrightarrow{\ev^*} H^n (Y \times \Map (Y,BG); \L)
\xrightarrow{\pi_*} H^0 (\Map (Y, BG); \L),
\]
where $\pi_* \ev^* \alpha := \L_Y$, by definition, and we recall that
objects of $H^0(\Map (Y, BG); \newline \L)$ are identified with local
systems or $0$-gerbes, see Section~\ref{0-gerbe}.

For any $Y \in \Ob \ \mathbf{Cob}(n+1)$, we define the value $Z^\alpha
(Y)$ of the \emph{TQFT functor} to be the space of global
sections of the local system $\L_Y$ over $\Map (Y, BG)$ constructed
above:
\[
Z^\alpha(Y) := H^0 (\Map(Y, BG); \L_Y) := \lim \L_Y \in \Vect,
\]
where the limit is taken for a natural extension $\Pi_1 \Map(Y, BG)
\xrightarrow{\L_Y} \L \to \Vect$ of the functor $\L_Y$, denoted by the
same symbol. The limit exists, because the category $\Vect$ is
complete.

Now we construct the arrow function of the TQFT functor. This can also
be viewed as a construction of ``fiberwise integral.'' Let $X$ be an
$(n+1)$-dimensional cobordism from $\del_- X$ to $\del_+ X$. We get
two local systems $\L_{\del_- X}$ and $\L_{\del_+ X}$ over the mapping
spaces $\Map(\del_- X, BG)$ and $\Map(\del_+ X, BG)$,
respectively. Let $p_{\pm}: \Map(X, BG) \to \Map(\del_\pm X, BG)$
denote the natural restriction morphisms. We start with constructing a
morphism $\L_X: p^*_{-} \L_{\del_- X} \to p^*_{+} \L_{\del_+ X} $ of
local systems on $\Map(X, BG)$. \emph{i.e}., a natural transformation
between functors $p^*_{-} \L_{\del_- X}$ and $p^*_{+} \L_{\del_+ X}:
\Pi_1(\Map(X, BG)) \to \L$. For each $f \in \Map(X, BG)$, by invoking
the construction of Section~\ref{morphism} once again, we get two
functors $F_\pm: C_{\del_\pm X} \to \L$ and the following morphism
\[
l_f: \lim_{C_{\del_- X}} F_- \to \lim_{C_{\del_+ X}} F_+
\]
in $\L$. Note that the fiber of each pull-back local system $p^*_{\pm}
\L_{\del_\pm X}$ over $f \in \Map(X, \linebreak[0] BG)$ is by
definition the fiber of $\L_{\del_\pm X}$ over $p_\pm(f)$, and that
fiber is $\lim_{C_{\del_\pm X}} F_\pm$ by the construction of
Section~\ref{object}. We define $\L_X(f)$ to be $l_f: p^*_{-}
\L_{\del_- X}|_f \to p^*_{+} \L_{\del_+ X}|_f $ on objects $f \in
\Map(X,BG)$ of $\Pi_1 (\Map (X,BG))$. A morphism $f \to g$ in the
fundamental groupoid $\Pi_1 (\Map (X,BG))$ is represented by a
homotopy $h \in \Map(X \times I, BG)$ between maps $f$ and $g \in \Map
(X, BG)$. To see that $\L_X$ consitutes a natural transformation, we
need to see that the diagram
\begin{equation}
\label{homotopy}
\begin{CD}
p^*_{-} \L_{\del_- X}|_f @>{l_f}>> p^*_{+} \L_{\del_+ X}|_f\\
@V{p^*_- l_{h|_{\del_- X \times I}}}VV @VV{p^*_+ l_{h|_{\del_- X \times I}}}V\\
p^*_{-} \L_{\del_- X}|_g @>{l_g}>> p^*_{+} \L_{\del_+ X}|_g
\end{CD}
\end{equation}
commutes. Indeed, the homotopy gives a morphism $H: f^* \alpha \to g^*
\alpha$ in the Picard groupoid $H^n (X; \L)$. Using the
bifunctoriality of the cap product, we get a 2-morphism $f^* \alpha
\cap - \Rightarrow g^* \alpha \cap - $ added to Diagram \eqref{F-map},
resulting in a commutative triangle
\begin{equation*}
\xymatrix{
F \ar@{=>}[rr]^{\Psi_H} \ar@{=>}[dr]_{\Psi_F}& & G \ar@{=>}[dl]^{\Psi_G}\\
& 0
}
\end{equation*}
on top of the upper part of Diagram~\eqref{little_triangle} and,
similarly, a commutative square
\begin{equation*}
  \xymatrix{
    (F_- + F_+) \circ \del \ar@{=>}[r]^{\Psi_{\del H}} \ar@{=>}[d]_{X_F}
    & (G_- + G_+) \circ \del \ar@{=>}[d]^{X_G}\\
    F \ar@{=>}[r]^{\Psi_H}  & G
  }
\end{equation*}
on top of the lower part of Diagram~\eqref{little_triangle}, with
$\Psi_{\del H}$ coming from the 2-morphism $f|_{\del X}^* \alpha \cap
- \Rightarrow g|_{\del X}^* \alpha \cap - $ added to the bottom
triangle in \eqref{F-map}. Passing to the limits, we see that
\eqref{homotopy} is commutative.


Now, after the morphism $\L_X: p^*_{-} \L_{\del_- X} \to p^*_{+}
\L_{\del_+ X} $ of local systems on $\Map(X, \linebreak[0] BG)$ is
constructed, we are ready to construct a linear map
\[
Z^\alpha (X): Z^\alpha (\del_- X) \to Z^\alpha (\del_+ X)
\]
or
\[
Z^\alpha (X): H^0 (\Map (\del_- X, BG); \L_{\del_- X}) \to H^0 (\Map
(\del_+ X, BG); \L_{\del_+ X}).
\]
The plan is to describe a push-pull along the diagram of spaces:
\[
\Map (\del_- X, BG) \xleftarrow{p_-} \Map (X, BG) \xrightarrow{p_+}
\Map (\del_+ X, BG).
\]
The pullback
\[
p_-^*: H^0 (\Map (\del_- X, BG); \L_{\del_- X}) \to H^0 (\Map (X, BG);
p_-^* \L_{\del_- X})
\]
is easy. So is an intermediate map: 
\[
H^0( \L_X): H^0 (\Map (X, BG); p_-^* \L_{\del_- X}) \to H^0 (\Map (X,
BG); p_+^* \L_{\del_+ X}).
\]
The pushforward
\[
(p_+)_*: H^0 (\Map (X, BG); p_+^* \L_{\del_+ X}) \to H^0 (\Map (\del_+
X, BG); \L_{\del_+ X})
\]
is not straightforward, and its existence relies on the specifics of
the topology of mapping spaces to $BG$ for a finite group $G$.

Recall that the space $\Map(X,BG)$ may naturally be realized at the
classifying space for principal $G$-bundles over $X$. This leads to a
natural homotopy equivalence
\[
\Map (X, BG) \sim \coprod_{[P \to X]} B \Aut (P),
\]
where the disjoint union is taken over isomorphism classes $[P \to X]
\simeq \pi_0 \Map (X, \linebreak[0] BG)$ of principal $G$-bundles $P
\to X$. The map $p_+: \Map (X, BG) \to \Map (\del_+ X, BG)$ is
homotopy equivalent to the natural restriction map
\[
p'_+: \coprod_{[P \to X]} B \Aut (P) \to \coprod_{[P_+ \to \del_+ X]} B
\Aut (P_+),
\]
which is a finite covering map over each connected component $B \Aut
(P_+)$, sometimes with empty fiber.

We will define the pushforward
\[
(p_+)_*: H^0 (\Map (X, BG); p_+^* \L_{\del_+ X}) \to H^0 (\Map (\del_+
X, BG); \L_{\del_+ X})
\]
as a transfer map
{\small
\[
(p'_+)_*: H^0 \left( \coprod_{[P \to X]} B \Aut (P); (p'_+)^*
  \L_{\del_+ X}\right) \to H^0 \left( \coprod_{[P_+ \to \del_+ X]} B
  \Aut (P_+); \L_{\del_+ X} \right),
\]
}
which will be constructed using the definition of $H^0$ as a limit
over the fundamental groupoid. Indeed, for every path $\gamma_+$ in $B
\Aut (P_+)$, we take all its lifts to the component $B \Aut (P)$ over
$B \Aut (P_+)$, which is a finite, possibly zero, number. For each
such path $\gamma$, we have a linear isometry $(p'_+)^* \L_{\del_+ X}
(\gamma): (p'_+)^* \L_{\del_+ X}(\gamma(0)) \to (p'_+)^* \L_{\del_+
  X}(\gamma(1))$, which, by definition of $(p'_+)^*$, is equal to the
isometry $\L_{\del_+ X} (\gamma_+): \L_{\del_+ X}(\gamma_+(0)) \to
\L_{\del_+ X}(\gamma_+(1))$. Since $H^0 \left( \coprod_{[P \to X]} B
  \Aut (P); (p'_+)^* \L_{\del_+ X} \right)$ is a limit of the functor
$(p'_+)^* \L_{\del_+ X}$, we have a canonical commutative diagram of
linear maps:
{\small
\[
\xymatrix{
    & H^0 \left( \coprod_{[P \to X]} B \Aut (P); (p'_+)^* \L_{\del_+ X} \right) \ar[dl] \ar[dr]\\
    (p'_+)^* \L_{\del_+ X}(\gamma(0)) \ar[rr] \ar@{=}[d]& & (p'_+)^*
    \L_{\del_+
      X}(\gamma(1)) \ar@{=}[d]\\
    \L_{\del_+ X}(\gamma_+(0)) \ar[rr] & & \L_{\del_+ X}(\gamma_+(1)).
  } 
  \]}
\begin{sloppypar}
If, given $\gamma_+$, we add the linear maps $H^0 \left( \coprod_{[P
    \to X]} B \Aut (P); (p'_+)^* \L_{\del_+ X} \right) \linebreak[3]
\to \linebreak[4] \L_{\del_+ X}(\gamma_+(0))$ over all possible
$\gamma$'s covering $\gamma_+$ and do the same for maps to $\L_{\del_+
  X}(\gamma_+(1))$, we will get a commutative diagram
\end{sloppypar}
{\small
\[
\xymatrix{
    & H^0 \left( \coprod_{[P \to X]} B \Aut (P); (p'_+)^* \L_{\del_+ X} \right) \ar[dl] \ar[dr]\\
    \L_{\del_+ X}(\gamma_+(0)) \ar[rr] & & \L_{\del_+ X}(\gamma_+(1)).
 }
\]
}
Since $H^0 \left( \coprod_{[P_+ \to \del_+ X]} B \Aut (P_+);
  \L_{\del_+ X} \right)$ is a limit of the functor $\L_{\del_+ X}$, we
get a canonical linear map
{\small
\[
H^0 \left( \coprod_{[P \to X]} B \Aut (P); (p'_+)^* \L_{\del_+ X}
\right) \to H^0 \left( \coprod_{[P_+ \to \del_+ X]} B \Aut (P_+);
  \L_{\del_+ X} \right),
\]
}
which we declare to be the \emph{transfer} $(p'_+)_*$.

Finally, the TQFT functor
\[
Z^\alpha (X): H^0 (\Map (\del_- X, BG); \L_{\del_- X}) \to H^0 (\Map
(\del_+ X, BG); \L_{\del_+ X})
\]
is defined as the composition of $(p_+)_*$, $H^0(\L_X)$, and $p_-^*$.

The invariance of $Z^\alpha$ under diffeomorphisms $X' \to X''$ of
cobordisms is obvious, as a diffeomorphism induces an isomorphism of
simplicial sets $\Sing(X')$ and $\Sing(X'')$ representing the
cobordisms and leads to isomorphic diagrams \eqref{big_triangle} and
\eqref{little_triangle} in a strict sense, thus giving the same
isometry $l_f$ of hermitian lines in Section~\ref{morphism}.

\appendix
\section{The Hom $2$-chain complex}
In this section we define the Hom $2$ - chain complex and a
tensor product in $\mathbf{2Ch({\emph{SCG}}})$. We recall that
given any two Picard groupoids $\A, \B \in \Ob(\emph{SCG})$,
$\Hom_{\emph{SCG}}(\A, \B)$ inherits a Picard groupoid structure,
i.e. the category $\emph(SCG)$ is enriched over itself .
Let $A_{\bullet}, \B_{\bullet} \in
\Ob(\mathbf{2Ch({\emph{SCG}}}))$. Then,
$(\Hom_{\mathbf{2Ch({\emph{SCG}}})}(\A_{\bullet}, \B_{\bullet}), d,
\phi)$ is a
chain complex whose $nth$. degree is defined as follows:\\
$\Hom_{\mathbf{2Ch({\emph{SCG}}})}(\A_{\bullet}, \B_{\bullet})_{n} = \underset{p} \prod \Hom_{\emph{SCG}}(\A_{p},\B_{p+n})$.\\
The differential $d:\Hom_{\mathbf{2Ch({\emph{SCG}}})}(\A_{\bullet},
\B_{\bullet})_{n} \rightarrow
\Hom_{\mathbf{2Ch({\emph{SCG}}})}(\A_{\bullet}, \B_{\bullet})_{n-1}$
is given by $(df)_{p} = df_{p} +(-1)^{p+1}f_{p-1}d$ and a composition
of $2$ - morphisms $dd(f) \Rightarrow d^{2}f + fd^{2} \Rightarrow 0$,
where the first $2$ - morphism comes from the distributivity law on
each degree of the Hom complex which is the consequence of the
enrichment of $\emph{SCG}$ over itself and the second $2$ - morphism
in the composition is obvious.  Similarly, we may define the Hom 2 -
chain complex of chain maps between a 2-cochain complex and a 2-chain
complex. Let $\C^{\bullet}$ be a 2-cochain complex of Picard
groupoids, let $\B_{\bullet} \in \Ob(\mathbf{2Ch({\emph{SCG}})})$,
then the 2-chain complex $([\C^{\bullet}, \B_{\bullet}], d, \phi)$ is
defined as the 2 - chain complex
$(\Hom_{\mathbf{2Ch({\emph{SCG}}})}(\C^{-\bullet}, \B_{\bullet}), d,
\phi)$, where $\C^{-\bullet} \in \Ob(\mathbf{2Ch({\emph{SCG}}}))$ is
the 2-chain complex obtained by negatively regrading $\C^{\bullet}$,
its degree $n$ is $[\C^{\bullet}, \B_{\bullet}]_{n} = \underset{p}
\prod \Hom_{\emph{SCG}}(\C^{-p},\B_{p+n})$.  The tensor product of two
chain complexes could be defined similarly.

\section{$\Pic$ categories}
\label{pic-category-appendix}

In this appendix we give the definition of a mathematical structure
which is built on a bicategory but whose mapping categories have
the structure of Picard groupoids.
\begin{df}
 A $\Pic$-category $\C$ consists of the following data
 \begin{enumerate}
  \item A small set, $\Ob(\C)$, whose elements will be called the
  \emph{objects} of $\C$.
  \item A function $\C(-,-):\Ob(\C) \times \Ob(\C) \to \Ob(\Pic)$,
  where $\Ob(\Pic)$ is the set of all Picard groupoids.
  \item For each object $s \in \Ob(\C)$, a homomorphism $id_s:\ast
  \to \C(s,s)$, where $\ast$ is the terminal Picard groupoid.
  \item For each triple of objects $s, t, u \in \Ob(\C)$, a \emph{composition
  bifunctor} $-\circ -:\C(t,u) \times \C(s,t) \to \C(s,u)$ which is subject
  to the following conditions
  \begin{enumerate}
  \item For each $h \in \Ob(\C(t,u))$, the functor
  \[
   h \circ -:\C(s,t) \to \C(s,u).
  \]
   is a homomorphism
  \item For each $g \in \Ob(\C(s,t))$, the functor
  \[
   - \circ g:\C(t,u) \to \C(s,u).
  \]
  is a homomorphism.
  \end{enumerate}  
  \item For each triple of objects $s, t, u \in \Ob(\C)$ and each pair of
  morphisms $g_1, g_2 \in \Ob(\C(s,t))$, a monoidal natural transformation
  $\phi^-_{g_1,g_2}:-\circ g_1 + - \circ g_2 \Rightarrow - \circ g_1+g_2$,
  where the homomorphism $-\circ g_1 + - \circ g_2:\C(t,u) \to \C(s,u)$ is
  defined pointwise.
  \item For each triple of objects $s, t, u \in \Ob(\C)$ and each pair of
  morphisms $h_1, h_2 \in \Ob(\C(t,u))$, a monoidal natural transformation
  $\psi_-^{h_1,h_2}: h_1 \circ - + h_2 \circ - \Rightarrow h_1+h_2 \circ -$,
  where the homomorphism $h_1 \circ - + h_2 \circ -:\C(s,t) \to \C(s,u)$ is
  defined pointwise.
  \item For each quadruple of objects $s,t,u,v \in \Ob(\C)$, a natural
  transformation, $\alpha$ called the \emph{associator},
  between functors defined in the following diagram
  \begin{equation*}
  \xymatrix{
    \C(u,v) \times \C(t,u) \times \C(s,t)
    \ar[r]^{\ \ \ \ \ \ id \times -\circ-} \ar[d]_{-\circ- \times id} &
    \C(u,v) \times \C(s,u) \ar[d]^{-\circ-} \ar@{}[dl]^{\alpha}|{\Leftarrow}\\
    \C(t,v) \times \C(s,t) \ar[r]  & \C(s,v). 
  },
 \end{equation*}
 and which is subject to the following conditions:
 \begin{enumerate}
  \item For each pair $(g, h) \in \Ob(\C(t,u)) \times \Ob(\C(u,v))$,
  the natural transformation $\alpha_{(h,g,-)}$ as in
  the following diagram
  \begin{equation*}
  \xymatrix{ 
  \C(s,t) \ar[r]_{g \circ -} \rruppertwocell<12>^{(h \circ g) \circ -}{\omit}
    & \C(s,u) \ar[r]_{h \circ -}  & \C(s,v) \lltwocell<\omit>{<3>\alpha_{(h,g,-)} \ \ \ \ \ \ \ \ }
  }
 \end{equation*}
  is a monoidal natural transformation.
  \item For each pair $(f, h) \in \Ob(\C(s,t)) \times \Ob(\C(u,v))$,
  the natural transformation $\alpha_{(h,-,f)}$ as in
  the following diagram
  \begin{equation*}
  \xymatrix{
    \C(t,u) \ar[r]^{h \circ-} \ar[d]_{-\circ f} &
    \C(t,v) \ar[d]^{- \circ f} \ar@{}[dl]^{\alpha_{(f,-,h)}}|{\Leftarrow}\\
    \C(s,u) \ar[r]_{h \circ-}  & \C(s,v) 
  }
 \end{equation*}
  is a monoidal natural transformation.
  \item For each pair $(f, g) \in \Ob(\C(s,t)) \times \Ob(\C(t,u))$,
  the natural transformation $\alpha_{(-,g,f)}$ as in
  the following diagram
  \begin{equation*}
  \xymatrix{ 
  \C(u,v) \ar[r]^{- \circ g} \rrlowertwocell<-12>_{- \circ (g \circ f)}{\omit}
    & \C(t,v) \ar[r]^{- \circ f}  & \C(s,v) \lltwocell<\omit>{<-3>\alpha_{(-,g,f)} \ \ \ \ \ \ \ \ }
  }
 \end{equation*}
  is a monoidal natural transformation.
  
 \end{enumerate}
 \item For each pair of objects $s,t \in \Ob(\C)$, two monoidal
 natural transformations
 \begin{equation*}
  \xymatrix{ 
  \C(s,t) \ar@{=}[r] \rlowertwocell<-10>_{id_t \circ -}{\omit}
    & \C(s,t) \ltwocell<\omit>{<-2>\lambda } &&
  \C(s,t) \ar@{=}[r] \rlowertwocell<-10>_{- \circ id_s}{\omit}
    & \C(s,t) \ltwocell<\omit>{<-2>\rho }
  }
 \end{equation*}

 \end{enumerate}

 \end{df}
 Let $\C$ and $\D$ be two $\Pic$-categories, A functor of
 $\Pic$-categories $F:\C \to \D$ is a functor of bicategories
 which respects the additional structure on the morphism categories
 of $\C$ and $\D$. We will skip a precise definition of a functor
 of $\Pic$-categories but an interested reader can define these functors
 rigorously using our definition of $\Pic$-categories.

\bibliographystyle{amsalpha}
\bibliography{dwf}

\end{document}